\newcommand{\divides}{\mid}
\newcommand{\notdivides}{\nmid}
\DeclareMathOperator{\ch}{ch}
\DeclareMathOperator{\ord}{ord}
\DeclareMathOperator{\dime}{dim}
\DeclareMathOperator{\pideg}{PI-deg}
\DeclareMathOperator{\gcdi}{gcd}
\DeclareMathOperator{\lcmu}{lcm}
\DeclareMathOperator{\tors}{tor}
\DeclareMathOperator{\mo}{mod}
\numberwithin{equation}{section}
\newtheorem{theo}{Theorem}[section]
\newtheorem{defi}{Definition}[section]
\newtheorem{prop}{Proposition}[section]
\newtheorem{coro}{Corollary}[section]
\newtheorem{lemm}{Lemma}[section]
\newtheorem{rema}{Remark}[section]
\begin{document}

\setcounter{page}{1} 
\baselineskip .65cm 
\pagenumbering{arabic}

\title[Quantum Heisenberg Enveloping Algebra]{Quantum Heisenberg Enveloping Algebras\\ at roots of unity}
\author [S. Bera,\ \  S. Mandal,\ \ S. Nandy]{Sanu Bera\ \ \ Sugata Mandal \ \ \ Soumendu Nandy}

\address {\newline Sanu Bera$^1$\ \ \ Sugata Mandal$^2$ \ \ \ Soumendu Nandy$^3$\newline School of Mathematical Sciences, \newline Ramakrishna Mission Vivekananda Educational and Research Institute (rkmveri), \newline Belur Math, Howrah, Box: 711202, West Bengal, India.}
\email{\href{mailto:sanubera6575@gmail.com}{sanubera6575@gmail.com$^1$};\href{mailto:gmandal1961@gmail.com}{gmandal1961@gmail.com$^2$};\newline \href{mailto:Soumendu.nandy100@gmail.com}{Soumendu.nandy100@gmail.com$^3$}}

\subjclass[2020]{16D60, 16D70, 16S85}
\keywords{Quantum Heisenberg Algebra, Simple modules, Polynomial Identity algebra}
\begin{abstract}
In this article, the two-parameter quantum Heisenberg enveloping algebra, which serves as a model for certain quantum generalized Heisenberg algebras, have been studied at roots of unity. In this context, the quantum Heisenberg enveloping algebra becomes a polynomial identity algebra, and the dimension of simple modules is bounded by its PI degree. The PI degree, center, and complete classification of simple modules up to isomorphism are explicitly presented. We work over a field of arbitrary characteristic, although our results concerning the representations require that it is algebraically closed.
\end{abstract}
\maketitle
\section{{Introduction}}
Let $\mathbb{k}$ be a field and $\mathbb{k}^*:=\mathbb{k}\setminus\{0\}$. The three-dimensional Heisenberg Lie algebra $\mathfrak{h}$ over $\mathbb{k}$ is defined on the basis $\{x,y,z\}$ subject to the relations $[x,y]=z,[x,z]=0,[y,z]=0$. The algebra $\mathfrak{h}$ is a nilpotent Lie algebra that occurs in quantum mechanics in the solution of the harmonic oscillator problem. The first Weyl algebra $A_1(\mathbb{k}):=\mathbb{k}\langle x,y~|~xy-yx=1\rangle$ is a simple factor ring $U(\mathfrak{h})/\langle z-1\rangle$ of the enveloping algebra $U(\mathfrak{h})$ and its oscillator representation gives the solution to the harmonic oscillator problem (cf. \cite{ks}). Determining the simple factor rings for enveloping algebras and quantum enveloping algebras is a long-standing problem in algebra and representation theory. 
\par In \cite{ks}, Kirkman and Small studied a $q$-analog to $U(\mathfrak{h})$, known as quantum Heisenberg enveloping algebra. 
For $q\in\mathbb{k}^*$, the quantum Heisenberg enveloping algebra $\mathcal{H}_q$ is the $\mathbb{k}$-algebra generated by $x,y,z$ subject to the relations
\[zx=q^{-1}xz,\ zy=qyz,\ yx-qxy=z.\]
This algebra arises from the work of physicists in constructing a $q$-analog to the quantum Harmonic oscillator (cf. \cite{am,th}). The algebra $\mathcal{H}_q$ is an iterated skew polynomial ring over $\mathbb{k}[z]$ 
and hence this is a prime affine Noetherian domain \cite{ks}. A key result in \cite{ks} is that there exists a central element $\Omega:=(yx-q^{-1}xy)z$ in $\mathcal{H}_q$ such that $\mathcal{H}_q/\langle\Omega-1\rangle$ is isomorphic to Hayashi’s $q$-analog of the Weyl algebra, hence providing a simple factor ring in the quantum setting.
\par In \cite{jg}, Jasson Gaddis introduced and studied a two-parameter analog of Heisenberg enveloping algebra $U(\mathfrak{h})$. For $p,q\in\mathbb{k}^*$, the quantum Heisenberg enveloping algebra $\mathcal{H}_{p,q}$ is the $\mathbb{k}$-algebra generated by $x,y,z$ subject to the relations
\[zx=p^{-1}xz,~ zy=pyz,~yx-qxy=z.\]
Notice that when $p=q$, the algebra $\mathcal{H}_{p,q}$ becomes $\mathcal{H}_{q}$ and hence $\mathcal{H}_{p,q}$ generalizes the one parameter analog $\mathcal{H}_q$. The algebra $\mathcal{H}_{p,q}$ has an iterated skew polynomial presentation twisted by automorphisms and derivation (see Subsection \ref{basic}) and is a conformal ambiskew polynomial ring as defined \cite{dj,dj1}. In \cite{jg}, the author studied prime ideals in $\mathcal{H}_{p,q}$, and related algebras. 
In the dependent parameter case assuming $p$ and $q$ are not roots of unity, the algebra $\mathcal{H}_{p,q}$ has a simple factor ring 
which generalizes the Hayashi-Weyl algebra (see \cite[Theorem 6.5]{jg}). 
In the inverse parameter case i.e., when $p=q^{-1}$, denote this algebra by $\mathcal{H}'_q$. This algebra was studied in \cite{ks,jg} and they showed that every nonzero prime ideal of $\mathcal{H}'_q$ contains $z$ (see \cite[Proposition 4.1]{jg}).
\par Most of the above-studied results for $\mathcal{H}_{p,q}$ are in generic cases when the deformation parameters are not roots of unity. 
In \cite{dj1} Jordan accomplished the classification of finite-dimensional simple modules over the conformal ambiskew polynomial rings $A[y,\alpha][x,\alpha^{-1},\delta]$ of automorphism type, where $A$ is an affine domain over an algebraically closed field. Recently in \cite{lk2}, Lopes and Razavinia introduced a new class of algebras $\mathcal{H}_q(f,g)$ for $f,g\in \mathbb{k}[z]$, named quantum generalized Heisenberg algebra, given by
\[\mathcal{H}_q(f,g):=\mathbb{k}\langle z,x,y~|~zx=xf(z),yz=f(z)y,yx-qxy=g(z)\rangle.\] This algebra $\mathcal{H}_q(f,g)$ is an ambiskew polynomial ring of endomorphism type over $\mathbb{k}[z]$ which includes the $\mathcal{H}_{p,q}$ for $f(z)=pz,g(z)=z$ and includes the generalized Heisenberg algebras for $g(z)=f(z)-z$ as studied in \cite{rk}. They explored the algebraic structure, classified finite-dimensional representations \cite{lk2}, and solved the isomorphism problem \cite{lk} for $\mathcal{H}_q(f,g)$.
\par Let $\alpha$ be a $\mathbb{k}$-algebra automorphism of $\mathbb{k}[z]$. Then $\alpha$ is linear and of the form $\alpha(z)=\omega z+b$ for some $0\neq\omega,b\in\mathbb{k}$. The order of $\alpha$ in the automorphism group of $\mathbb{k}[z]$ is finite, say $l$ if and only if $\alpha(z)=\omega z+(1-\omega)b$ for some $l$-th primitive root $\omega$ of unity and $b\in\mathbb{k}$. For such an $\alpha$ with $\omega\neq q$, it follows from \cite[Proposition 2.2]{lk} that the quantum generalized Heisenberg algebra $\mathcal{H}_q(f,g)$ for $f(z)=\alpha(z)$ and $g(z)=\alpha(z)-qz$ is isomorphic to the $\mathcal{H}_{p,q}$ for $p=\omega^{-1}$. Hence the algebra $\mathcal{H}_{p,q}$ with $pq\neq 1$ serves as a model for certain quantum generalized Heisenberg algebras at roots of unity. However the analysis of the ambiskew polynomial algebra $\mathcal{H}_{p,q}$ and its representation theory, throughout the research literature including \cite{dj1,lk2}, has been rather abstract. 
\par In this article, we wish to study the algebra $\mathcal{H}_{p,q}$ and its representations in the case when $p$ and $q$ are roots of unity.
In the roots of unity context, the algebra $\mathcal{H}_{p,q}$ with $pq\neq 1$ becomes a finitely generated module over its center and hence a polynomial identity (PI) algebra. The theory of PI-algebras is a crucial tool for studying this algebra. The PI degree is a fundamental invariant that bounds the $\mathbb{k}$-dimension of simple $\mathcal{H}_{p,q}$-modules, and this bound is in fact attained (see Subsection \ref{pialg}). The computation of the PI degree for $\mathcal{H}_{p,q}$ is of substantial importance as it sheds light on the classification of simple $\mathcal{H}_{p,q}$-modules. In the inverse parameter case, the algebra $\mathcal{H}'_q$ is primitive and hence not a PI algebra when $\ch(\mathbb{k})=0$. Moreover, the algebra $\mathcal{H}'_q$ becomes a PI algebra if and only if $\ch(\mathbb{k})>0$ (see Section \ref{sec9}).
\par In the roots of unity context, we shall assume that $p$ and $q$ are primitive $m$-th and $n$-th roots of unity in $\mathbb{k}$, respectively. We will maintain the assumption $pq\neq 1$ throughout this article, except in Section \ref{sec9}. This condition ensures that neither $m$ nor $n$ can simultaneously equal 1.
We work over a field $\mathbb{k}$ of arbitrary characteristics but require it to be algebraically closed for our results on representations. Throughout the article, all modules under consideration are right modules.
\par The present paper is organized as follows. In Section \ref{sec2}, we begin by revisiting essential properties of $\mathcal{H}_{p,q}$ and laying the groundwork with necessary facts for the theory of Polynomial Identity algebras.
In Section \ref{pisection}, we explicitly compute the PI degree of $\mathcal{H}_{p,q}$ using derivation erasing and the PI degree of quantum affine space. In Section \ref{sec4}, we compute the center of $\mathcal{H}_{p,q}$ completely. In Section \ref{sec5}, we construct three types of $z,\theta$-torsionfree simple $\mathcal{H}_{p,q}$-modules: $\mathcal{V}_1(\mu,\lambda,\gamma)$, $\mathcal{V}_2(\mu,\lambda)$ and $\mathcal{V}_3(\lambda)$ for any $\mu,\lambda,\gamma\in\mathbb{k}^*$. In Section \ref{sec6}, we proceed to classify all simple $\mathcal{H}_{p,q}$-modules based on the action of certain normal and central elements within $\mathcal{H}_{p,q}$. 
In Section \ref{sec7}, we then determine the isomorphism classes of simple $z,\theta$-torsionfree modules. 
In Section \ref{maximum}, we determine the values of $m$ and $n$ such that the $\mathbb{k}$-dimension of $\mathcal{V}_3(\lambda)$, which equals $\ord(pq)$, is consistently either at its maximum or non-maximum. In the last section, we will consider the inverse parameter case.
\section{Preliminaries}\label{sec2}
\subsection{Torsion and Torsionfree Modules} 
Let $A$ be an algebra and $M$ be a right $A$-module and $S\subset A$ be a right Ore set. The submodule
\[\tors_{S}(M):=\{m\in M~|~ms=0\ \text{for some}\  s\in S\}\]
is called the $S$-torsion submodule of $M$. The module $M$ is said to be $S$-torsion if $\tors_{S}(M)=M$ and $S$-torsion free if $\tors_{S}(M)=0$. If Ore set $S$ is generated by $x\in A$, we simply say that the $S$-torsion/torsionfree module $M$ is $x$-torsion/torsionfree.
\par A nonzero element $x$ of an algebra ${A}$ is called a {\it normal} element if $x{A}={A}x$. Clearly, if $x$ is a normal element of $A$, then the set $S=\{x^i~|~i\geq 0\}$ is an Ore set generated by $x$. The next lemma is obvious.
\begin{lemm}\label{itn}
Suppose that $A$ is an algebra, $x$ is a normal element of $A$ and $M$ is a simple $A$-module. Then either $Mx=0$ (if $M$ is $x$-torsion) or the map $x_{M}:M\rightarrow M, m\mapsto mx$ is an isomorphism (if $M$ is $x$-torsionfree).
\end{lemm}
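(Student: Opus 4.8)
The plan is to leverage the normality of $x$ to exhibit the relevant subsets of $M$ as $A$-submodules, and then to let simplicity do the rest. First I would check that $Mx$ is a right $A$-submodule of $M$: since $xA=Ax$, we have $(Mx)A=M(xA)=M(Ax)=(MA)x\subseteq Mx$. By simplicity of $M$ this leaves exactly two possibilities, $Mx=0$ or $Mx=M$. Likewise I would verify that $K:=\{m\in M~|~mx=0\}$ is a right $A$-submodule: for $a\in A$ we have $ax\in Ax=xA$, say $ax=xa'$, so $mx=0$ implies $(ma)x=m(ax)=m(xa')=(mx)a'=0$. It is here --- and only here --- that normality of $x$, rather than centrality, is used.

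Next I would dispose of the two cases. If $Mx=0$, then every $m\in M$ is annihilated by $x\in S=\{x^i~|~i\geq 0\}$, so $M=\tors_S(M)$, i.e. $M$ is $x$-torsion; this is the first alternative. If instead $Mx=M$, then the map $x_M$ is surjective, and since $M\neq 0$ we get $Mx\neq 0$, hence $K\neq M$; by simplicity $K=0$, which is exactly injectivity of $x_M$. As $x_M$ is additive and commutes with the $\mathbb{k}$-action on $M$, it is thus a bijective $\mathbb{k}$-linear endomorphism of $M$, i.e. an isomorphism. (If in addition $x$ is regular in $A$, conjugation by $x$ yields an automorphism $\sigma$ of $A$ and $x_M$ is even an isomorphism of $A$-modules $M\xrightarrow{\sim}M^{\sigma}$; but for the statement as written, $\mathbb{k}$-linear bijectivity suffices.)

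Finally I would confirm that these two cases genuinely match the torsion/torsionfree labels and are exhaustive. Since $M$ is simple, $\tors_S(M)$ is a submodule equal to $0$ or $M$, so $M$ is either $x$-torsionfree or $x$-torsion. If $M$ is $x$-torsion, pick $0\neq m$ and a minimal $i\geq 1$ with $mx^i=0$; then $0\neq mx^{i-1}\in K$, so $K\neq 0$, hence $K=M$, hence $Mx=0$. Thus $M$ is $x$-torsion $\iff Mx=0$, while $M$ is $x$-torsionfree $\iff Mx=M\iff x_M$ is an isomorphism, which is precisely the dichotomy asserted. There is no real obstacle here --- the lemma is flagged as obvious --- and the only points meriting care are that $Mx$ and $K$ are honest $A$-submodules (where normality enters) and that the torsion and torsionfree conditions are complementary for a simple module, so that the two stated cases are exhaustive and mutually exclusive.
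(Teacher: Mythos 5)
Your proof is correct, and since the paper simply declares this lemma obvious and gives no argument, what you have written is precisely the standard reasoning the authors intend: normality of $x$ makes both $Mx$ and $\{m\in M~|~mx=0\}$ into $A$-submodules, simplicity forces each to be $0$ or $M$, and the two alternatives match the $x$-torsion/$x$-torsionfree dichotomy. Your extra care in checking that the kernel argument needs only $xA=Ax$ (not centrality) and that the torsion and torsionfree cases are exhaustive for a simple module is exactly the right level of detail.
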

The above lemma says that the action of a normal element on a simple module is either trivial or invertible.
\subsection{Identities and Basic Properties}\label{basic} 
Given a matrix $\Lambda=(\lambda_{ij})$ with $\lambda_{ii}=1$ and $\lambda_{ij}=\lambda_{ji}^{-1}$, the quantum affine space is the $\mathbb{k}$-algebra $\mathcal{O}_{\Lambda}(\mathbb{k}^n)$ generated by the variables $x_1,\cdots ,x_n$ subject to the relations \[x_ix_j=\lambda_{ij}x_jx_i, \ \ \ \forall\ \ \ 1 \leq i,j\leq n.\]
The $\mathbb{k}$-subalgebra $R$ of $\mathcal{H}_{p,q}$ generated by $z$ and $x$ is a quantum plane \[\mathcal{O}_p(\mathbb{k}^2)=\langle x,z~|~xz=pzx\rangle\] and can be expressed as $R=\mathbb{k}[z][x,\alpha]$
where $\alpha$ is the automorphism of $\mathbb{k}[z]$ defined by $\alpha(z)=pz$. Therefore the algebra $\mathcal{H}_{p,q}$ can be presented as an iterated skew polynomial ring of the form $R[y,\beta,\delta]$ where $\beta$ is the automorphism of $R$ given by $\beta(z)=p^{-1}z,~\beta(x)=qx$ and $\delta$ is a $\beta$-derivation on $R$ given by $\delta(z)=0,~\delta(x)=z$. Thus the algebra $\mathcal{H}_{p,q}$ is a conformal ambiskew polynomial ring over $\mathbb{k}[z]$. The skew polynomial version of the Hilbert-Basis theorem (cf. \cite[Theorem 2.6]{mcr}) yields the following:
\begin{prop}\label{basish}
The algebra $\mathcal{H}_{p,q}$ is a prime affine Noetherian domain. Moreover, the monomials $z^ix^jy^k$ ($i,j,k\in\mathbb{Z}_{\geq 0}$) form a $\mathbb{k}$-basis for $\mathcal{H}_{p,q}$.
\end{prop}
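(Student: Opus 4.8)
The plan is to exploit the iterated skew polynomial presentation $\mathcal{H}_{p,q}=\mathbb{k}[z][x,\alpha][y,\beta,\delta]$ recalled just above the statement, and to invoke the skew polynomial version of the Hilbert Basis Theorem together with the standard facts on prime-ness of Ore extensions. First I would verify the base case: $\mathbb{k}[z]$ is a commutative polynomial ring in one variable, hence an affine Noetherian domain. Passing to $R=\mathbb{k}[z][x,\alpha]$, since $\alpha$ is an automorphism of $\mathbb{k}[z]$, the Ore extension $R$ is again Noetherian by the skew Hilbert Basis Theorem (cf. \cite[Theorem 2.6]{mcr}), and it is a domain because $\mathbb{k}[z]$ is a domain and $\alpha$ is injective (leading-term considerations on the degree in $x$ show there are no zero divisors). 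Then I would repeat the argument one more level up: $\mathcal{H}_{p,q}=R[y,\beta,\delta]$ with $\beta\in\operatorname{Aut}(R)$ and $\delta$ a $\beta$-derivation, so the same theorem yields that $\mathcal{H}_{p,q}$ is Noetherian, and the usual leading-coefficient argument in the variable $y$ (using that $\beta$ is injective and $R$ is a domain) shows it is a domain. Affineness is clear since $\mathcal{H}_{p,q}$ is generated as a $\mathbb{k}$-algebra by the three elements $x,y,z$.

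For primeness I would note that an Ore extension $S[y,\beta,\delta]$ of a Noetherian domain $S$ by an automorphism $\beta$ is automatically a domain, and every domain is prime; alternatively one can cite the standard result that if $S$ is prime (resp. a domain) and $\beta$ is an automorphism then $S[y,\beta,\delta]$ is prime (resp. a domain). So primeness follows immediately once the domain property is established, and there is nothing extra to prove here.

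For the basis statement, the key input is that in a skew polynomial ring $S[t,\sigma,\partial]$ with $\sigma$ an automorphism, every element is uniquely a left (or right) $S$-polynomial in $t$; iterating this twice and using that $\{z^i\}_{i\ge 0}$ is a $\mathbb{k}$-basis of $\mathbb{k}[z]$, one gets that $\{z^i x^j : i,j\ge 0\}$ is a $\mathbb{k}$-basis of $R$, and then $\{z^i x^j y^k : i,j,k\ge 0\}$ is a $\mathbb{k}$-basis of $\mathcal{H}_{p,q}$. One must be slightly careful about the side on which coefficients are written — because of the relations $xz=pzx$ and $yx-qxy=z$, rewriting an arbitrary word in $x,y,z$ into the claimed normal form requires moving all $z$'s to the left and all $y$'s to the right using the defining relations — but the freeness of the skew polynomial construction guarantees that no linear relations among the monomials $z^ix^jy^k$ are introduced in the process.

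The main obstacle, such as it is, is purely bookkeeping: one must confirm that the pair $(\beta,\delta)$ genuinely defines a $\beta$-derivation on $R$ (i.e. $\delta(rs)=\delta(r)s+\beta(r)\delta(s)$ for the generators $z,x$ of $R$, equivalently that the relation $yx-qxy=z$ is consistent with $zx=p^{-1}xz$ and $zy=pyz$) so that the Ore extension $R[y,\beta,\delta]$ is well-defined and the Diamond Lemma / PBW-type normal form applies; this is a short direct check on the three defining relations. Everything else is a formal consequence of the two-step Ore extension structure already spelled out in Subsection~\ref{basic}.
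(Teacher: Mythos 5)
Your proposal is correct and follows essentially the same route as the paper, which likewise derives everything from the iterated skew polynomial presentation $\mathbb{k}[z][x,\alpha][y,\beta,\delta]$ and the skew polynomial version of the Hilbert Basis Theorem \cite[Theorem 2.6]{mcr}, with the domain, prime, affine, and PBW-basis claims all standard consequences of the Ore extension structure. The only difference is that you spell out the routine verifications (the $\beta$-derivation check and the leading-term argument) that the paper leaves implicit.
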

The defining relations of $\mathcal{H}_{p,q}$ state that the element $z$ is normal within $\mathcal{H}_{p,q}$. 
Define $\theta:=yx-p^{-1}xy$. Then we can easily check that 
\begin{itemize}
    \item[(1)] $\theta=(q-p^{-1})xy+z=(1-p^{-1}q^{-1})yx+p^{-1}q^{-1}z$,
    \item[(2)] $\theta z=z\theta,\ \theta x=qx\theta,\ \theta y=q^{-1}y\theta.$
\end{itemize}

Thus $\theta$ is also normal in $\mathcal{H}_{p,q}$. 
These two normal elements will play a significant role in the classification of simple modules over $\mathcal{H}_{p,q}$.
\par Define the $(p,q)$-number to be
\begin{equation}\label{pqnum}
    [k]_{p,q}:=\frac{q^k-p^{-k}}{q-p^{-1}}=\sum\limits_{i=0}^{k-1}q^ip^{-(k-1-i)}.
\end{equation}
Note that $[k]_{p,q}=0$ if $p$ and $q$ are roots of unity and both orders divide $k$. 
\begin{lemm} \emph{(\cite[Lemma 3.8]{jg})}\label{identity}
  In $\mathcal{H}_{p,q}$, the following identities hold for $k\geq 0$:
\begin{itemize}
    \item [(1)] $yx^k=q^kx^ky+[k]_{p,q}x^{k-1}z$.
    \item [(2)] $y^kx=q^kxy^k+ [k]_{p,q}zy^{k-1}$.
\end{itemize}
\end{lemm}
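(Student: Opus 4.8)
The plan is to prove each identity by induction on $k$, using the defining relations of $\mathcal{H}_{p,q}$ together with the recurrence satisfied by the $(p,q)$-numbers. The base case $k=0$ is trivial in both parts, since $[0]_{p,q}=0$ and the identities reduce to $y=y$ and $x=x$; the case $k=1$ is exactly the relation $yx=qxy+z$ (note $[1]_{p,q}=1$), which is a rewriting of the defining relation $yx-qxy=z$. So the real content is the inductive step.

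For part (1), I would assume $yx^k=q^kx^ky+[k]_{p,q}x^{k-1}z$ and compute $yx^{k+1}=(yx^k)x=\bigl(q^kx^ky+[k]_{p,q}x^{k-1}z\bigr)x$. In the first term I push $y$ past the remaining $x$ using $yx=qxy+z$, giving $q^kx^k(qxy+z)=q^{k+1}x^{k+1}y+q^kx^kz$. In the second term I need to move $z$ past $x$: since $z$ is normal with $zx=p^{-1}xz$, equivalently $xz = p z x$, so $x^{k-1}zx = p\,x^{k-1}xz = p\,x^k z$, hence $[k]_{p,q}x^{k-1}zx = p[k]_{p,q}x^{k}z$. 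Wait — I should be careful with the direction: from $zx=p^{-1}xz$ we get $xz=pzx$, and I want $z x$ in terms of $x z$, which is $zx = p^{-1}xz$; so $x^{k-1}zx$: move the single trailing $x$ leftward through $z$ once, $zx=p^{-1}xz$, giving $x^{k-1}zx = p^{-1}x^{k-1}xz = p^{-1}x^k z$. Thus the second term contributes $p^{-1}[k]_{p,q}x^k z$. Collecting, $yx^{k+1}=q^{k+1}x^{k+1}y+\bigl(q^k+p^{-1}[k]_{p,q}\bigr)x^kz$, and it remains to check $q^k+p^{-1}[k]_{p,q}=[k+1]_{p,q}$, which is immediate from the sum formula $[k+1]_{p,q}=\sum_{i=0}^{k}q^ip^{-(k-i)}=q^k+p^{-1}\sum_{i=0}^{k-1}q^ip^{-(k-1-i)}$.

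Part (2) is entirely parallel, inducting on the power of $y$: assume $y^kx=q^kxy^k+[k]_{p,q}zy^{k-1}$ and compute $y^{k+1}x=y(y^kx)=y\bigl(q^kxy^k+[k]_{p,q}zy^{k-1}\bigr)$. In the first term use $yx=qxy+z$ to get $q^k(qxy+z)y^k=q^{k+1}xy^{k+1}+q^kzy^k$; in the second term move $y$ past $z$ using the normality relation $zy=qyz$, i.e. $yz=q^{-1}zy$, so $y z y^{k-1}=q^{-1}zy^k$, contributing $q^{-1}[k]_{p,q}zy^k$. Summing the coefficients of $zy^k$ gives $q^k+q^{-1}[k]_{p,q}$, and one checks this equals $[k+1]_{p,q}$ via $[k+1]_{p,q}=\sum_{i=0}^{k}q^ip^{-(k-i)}$; indeed here the natural grouping is $[k+1]_{p,q}=q\sum_{i=0}^{k-1}q^i p^{-(k-1-i)}\cdot\! \text{(shift)}$, so I should instead split off the top term: $[k+1]_{p,q}=q^k+q^{-1}[k]_{p,q}$ follows from writing $[k+1]_{p,q}=q^k+\sum_{i=0}^{k-1}q^ip^{-(k-i)}=q^k+q^{-1}[k]_{p,q}$. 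Both verifications are the same elementary manipulation of the closed form in \eqref{pqnum}. I do not anticipate a genuine obstacle here; the only thing to be careful about is bookkeeping the two different normality relations ($zx=p^{-1}xz$ versus $zy=qyz$) in the correct direction at each step, which is why the two parts produce the coefficients $p^{-1}[k]_{p,q}$ and $q^{-1}[k]_{p,q}$ respectively while still both summing to $[k+1]_{p,q}$.
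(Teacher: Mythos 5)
Your part (1) is correct, but part (2) contains a genuine error that is then masked by a second error in the arithmetic check. You write ``move $y$ past $z$ using the normality relation $zy=qyz$,'' but the defining relations of $\mathcal{H}_{p,q}$ are $zx=p^{-1}xz$, $zy=pyz$, $yx-qxy=z$ --- so the correct relation is $zy=pyz$, hence $yz=p^{-1}zy$, not $yz=q^{-1}zy$. (You appear to have imported the one-parameter relation from $\mathcal{H}_q$, where $p=q$.) With the correct relation, $y\bigl([k]_{p,q}\,zy^{k-1}\bigr)=p^{-1}[k]_{p,q}\,zy^{k}$, so the coefficient collected in the inductive step is $q^{k}+p^{-1}[k]_{p,q}$, exactly the same as in part (1), and this does equal $[k+1]_{p,q}$ by the split you already used there. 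Your final sentence --- that the two parts produce the coefficients $p^{-1}[k]_{p,q}$ and $q^{-1}[k]_{p,q}$ ``respectively while still both summing to $[k+1]_{p,q}$'' --- is the clearest signal of the confusion: those two expressions cannot both sum with $q^k$ to $[k+1]_{p,q}$ unless $p=q$.

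The claimed verification $\sum_{i=0}^{k-1}q^{i}p^{-(k-i)}=q^{-1}[k]_{p,q}$ is also false; factoring out $p^{-1}$ gives $\sum_{i=0}^{k-1}q^{i}p^{-(k-i)}=p^{-1}\sum_{i=0}^{k-1}q^{i}p^{-(k-1-i)}=p^{-1}[k]_{p,q}$. A concrete sanity check exposes the problem immediately: $y^{2}x=y(qxy+z)=q^{2}xy^{2}+qzy+yz=q^{2}xy^{2}+(q+p^{-1})zy$, using $yz=p^{-1}zy$, which matches $[2]_{p,q}=q+p^{-1}$; if one instead used $yz=q^{-1}zy$ one would get $q+q^{-1}$, which is wrong for $p\neq q$. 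So the correction is small and local --- replace $q^{-1}$ by $p^{-1}$ in the $z$--$y$ swap and in the closed-form identity --- but as written the argument for part (2) does not prove the stated lemma. (Note the paper itself quotes this lemma from \cite[Lemma~3.8]{jg} without proof, so there is no in-paper proof to compare against; the evaluation here is of your proof on its own merits.)
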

\begin{rema}
   It is important to mention that all the aforementioned results for $\mathcal{H}_{p,q}$ are valid for any $p,q\in \mathbb{k}^*$.
\end{rema}
\begin{coro}\label{cp1}
    If $p$ and $q$ are $l$-th roots of unity, then the elements $z^l,x^l$, and $y^l$ are contained in the center of $\mathcal{H}_{p,q}$. 
\end{coro}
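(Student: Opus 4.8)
The plan is to show each of $z^l$, $x^l$, $y^l$ commutes with all three generators $x$, $y$, $z$, which suffices since they generate $\mathcal{H}_{p,q}$. Since $p$ and $q$ are $l$-th roots of unity, we have $p^l = q^l = 1$, and also $[l]_{p,q} = 0$ because both $\ord(p)$ and $\ord(q)$ divide $l$ (using the remark after \eqref{pqnum}).

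First I would handle $z^l$. From the defining relations $zx = p^{-1}xz$ and $zy = pyz$, one gets $z^l x = p^{-l} x z^l = x z^l$ and $z^l y = p^l y z^l = y z^l$, while $z^l$ trivially commutes with $z$. Next, for $x^l$: the relation $xz = pzx$ gives $x^l z = p^l z x^l = z x^l$, and $x^l$ commutes with $x$ trivially; the only nontrivial check is commuting with $y$, which is exactly where Lemma~\ref{identity}(1) enters. Applying it with $k = l$ yields $yx^l = q^l x^l y + [l]_{p,q} x^{l-1} z = x^l y$, since $q^l = 1$ and $[l]_{p,q} = 0$. Symmetrically, for $y^l$: the relation $zy = pyz$ gives $y^l z = z y^l$ (after rewriting $zy^l = p^l y^l z$), it commutes with $y$ trivially, and Lemma~\ref{identity}(2) with $k = l$ gives $y^l x = q^l x y^l + [l]_{p,q} z y^{l-1} = x y^l$.

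Since each of $z^l, x^l, y^l$ commutes with the generating set $\{x, y, z\}$ of $\mathcal{H}_{p,q}$, each lies in the center. I do not anticipate any serious obstacle: the argument is a direct computation, and the only subtle point is recognizing that the hypothesis ``$p$ and $q$ are $l$-th roots of unity'' forces $[l]_{p,q} = 0$ (not just $p^l = q^l = 1$), so that the correction terms in Lemma~\ref{identity} vanish. It is worth noting this corollary does not require $p, q$ to be \emph{primitive} $l$-th roots nor the standing hypothesis $pq \neq 1$; it holds for any common multiple $l$ of the two orders.
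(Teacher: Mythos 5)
Your computation is correct and is exactly the argument the paper intends: the corollary is a direct consequence of the defining relations together with Lemma~\ref{identity} at $k=l$, using $p^l=q^l=1$ and $[l]_{p,q}=0$.

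One point needs correcting, namely your closing claim that the standing hypothesis $pq\neq 1$ can be dropped. The vanishing of $[l]_{p,q}$ rests on the fraction form $[l]_{p,q}=\frac{q^l-p^{-l}}{q-p^{-1}}$, which requires $q\neq p^{-1}$; the remark after (\ref{pqnum}) is made under the paper's blanket assumption $pq\neq 1$. If $p=q^{-1}$, then
\[
[l]_{p,q}=\sum_{i=0}^{l-1}q^iq^{l-1-i}=l\,q^{l-1},
\]
which is nonzero whenever $\chr(\mathbb{k})\notdivides l$, so Lemma~\ref{identity} gives $yx^l=x^ly+l q^{l-1}x^{l-1}z\neq x^ly$ and likewise $y^lx\neq xy^l$: only $z^l$ remains central. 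This is consistent with Section~\ref{sec9}, where in the inverse parameter case the central powers of $x$ and $y$ are $x^{n\mathrm{p}},y^{n\mathrm{p}}$ in characteristic $\mathrm{p}>0$, and in characteristic $0$ the algebra $\mathcal{H}'_q$ is primitive and not PI --- which would be impossible if $z^l,x^l,y^l$ were all central, since $\mathcal{H}'_q$ would then be a finite module over a central subalgebra. Your other observation (that $l$ need only be a common multiple of the two orders, not their least common multiple, and primitivity is not needed) is fine.
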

\subsection{Polynomial Identity Algebras}\label{pialg} A ring $R$ is said to be a polynomial identity (PI) ring if there exists a polynomial $f(x_1,\cdots,x_k)\in \mathbb{Z}\langle x_1,\cdots,x_k\rangle$ such that $f(r_1,\cdots,r_k)=0$ for all $r_i\in R$.  The minimal degree of a PI ring $R$ is the least degree of all monic polynomial identities for $R$. PI rings cover a large class of rings including commutative rings. Commutative rings satisfy the polynomial identity $x_1x_2-x_2x_1$ and therefore have minimal degree $2$. 
\par In the roots of unity case, it follows from Corollary \ref{cp1} that the algebra $\mathcal{H}_{p,q}$ is a finitely generated module over its center. Hence it is a PI algebra by \cite[Corollary 13.1.13]{mcr}. This roots of unity assumption is also necessary, as explained below.
\begin{prop}\emph{(\cite[Proposition 3.11]{jg})}
    The algebra $\mathcal{H}_{p,q}$ is a PI algebra if and only if $p$ and $q$ are roots of unity.
\end{prop}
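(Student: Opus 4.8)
The plan is to prove the two implications separately; the reverse implication is essentially the observation already recorded in Subsection \ref{pialg}, while the forward implication reduces to the corresponding fact for quantum planes. For ($\Leftarrow$), assume $p$ and $q$ are roots of unity and let $l$ be a common multiple of their orders, so that $p$ and $q$ are $l$-th roots of unity. By Corollary \ref{cp1} the commutative subalgebra $\mathbb{k}[z^l,x^l,y^l]$ lies in the center of $\mathcal{H}_{p,q}$, and from the monomial basis of Proposition \ref{basish}, pulling the central powers to the front, one sees that $\mathcal{H}_{p,q}$ is spanned over this subalgebra by the finitely many $z^ix^jy^k$ with $0\le i,j,k<l$. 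Hence $\mathcal{H}_{p,q}$ is a finitely generated module over its center and is therefore PI by \cite[Corollary 13.1.13]{mcr}.

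For ($\Rightarrow$) I would argue the contrapositive, using the two standard closure properties that a subalgebra and a quotient of a PI algebra are again PI (with the same identity). Suppose first that $p$ is not a root of unity. By Subsection \ref{basic} the $\mathbb{k}$-subalgebra of $\mathcal{H}_{p,q}$ generated by $z$ and $x$ is the quantum plane $\mathcal{O}_p(\mathbb{k}^2)$; since this algebra is not PI (see below), neither is $\mathcal{H}_{p,q}$. Suppose instead that $q$ is not a root of unity. Since $z$ is normal, $\langle z\rangle=z\mathcal{H}_{p,q}$ is a two-sided ideal, and by Proposition \ref{basish} it is the $\mathbb{k}$-span of the monomials $z^ix^jy^k$ with $i\ge 1$; consequently $\mathcal{H}_{p,q}/\langle z\rangle$ has $\mathbb{k}$-basis the images of the $x^jy^k$ and inherits the single relation $\bar{y}\bar{x}=q\bar{x}\bar{y}$, so it is isomorphic to the quantum plane $\mathcal{O}_q(\mathbb{k}^2)$, which is again not PI. Either way $\mathcal{H}_{p,q}$ fails to be PI, as required.

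The only input that is not purely formal, and the step I expect to carry the real content, is the claim that $\mathcal{O}_{\lambda}(\mathbb{k}^2)$ is not a PI algebra when $\lambda\in\mathbb{k}^*$ is not a root of unity. The cleanest route I would take: $\mathcal{O}_\lambda(\mathbb{k}^2)$ is a Noetherian domain in which each of the two generators is a normal, hence Ore, element, so inverting both of them produces the quantum torus $\mathbb{k}_\lambda[t^{\pm1},u^{\pm1}]$; since localizations of PI algebras are PI, it suffices to know that this quantum torus is not PI, and indeed when $\lambda$ is not a root of unity it is a simple algebra with center $\mathbb{k}$, hence infinite-dimensional over its center, hence not PI by Kaplansky's theorem. (Alternatively one may invoke directly the well-known description of when a quantum affine space is PI.) With that fact in hand, the two reductions above close the argument, and no further step presents a genuine obstacle.
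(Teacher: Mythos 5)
Your proof is correct. Note, however, that the paper does not actually prove this proposition; it cites \cite[Proposition 3.11]{jg}, and the only part it sketches (in the sentence just before the statement) is the reverse implication, which matches your argument: by Corollary \ref{cp1} the algebra is a finitely generated module over a central subalgebra, hence PI by \cite[Corollary 13.1.13]{mcr}. Your forward implication is a self-contained argument that the paper delegates to Gaddis. The contrapositive reduction is sound and economical: the subalgebra $\mathbb{k}\langle z,x\rangle\cong\mathcal{O}_p(\mathbb{k}^2)$ (identified in Subsection \ref{basic}) and the quotient $\mathcal{H}_{p,q}/\langle z\rangle\cong\mathcal{O}_q(\mathbb{k}^2)$ (identified in Subsection \ref{ztorsion}) together detect non-root-of-unity behaviour of $p$ and $q$ separately, and the PI property passes to subalgebras and quotients, so everything hinges on the quantum plane being non-PI at a non-root of unity. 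Your route to that via the quantum torus and Kaplansky is standard and correct.

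One small caveat worth flagging: the blanket assertion that "localizations of PI algebras are PI" is not quite a free-standing fact for arbitrary Ore sets in arbitrary rings. It is safest to note that $\mathcal{O}_\lambda(\mathbb{k}^2)$ is a Noetherian domain, hence prime; if it were PI, Posner's Theorem would give a simple Artinian PI ring of fractions $Q$, and since the quantum torus embeds in $Q$ and subalgebras of PI rings are PI, the torus would be PI as well. This makes the step airtight without appealing to a general localization principle. (Alternatively, as you remark, one can simply invoke the known criterion for quantum affine spaces, which is in the spirit of \cite{di}.) With that adjustment, nothing is missing.
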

Now we define the PI degree of prime PI-algebras. This definition will suffice because the algebras covered in this article are all prime. Due to the Artin-Wedderburn Theorem, any central simple algebra $A$ is isomorphic to a matrix ring over a central simple division ring. Hence $\dime_{Z(A)}(A)=n^2$ for some natural number $n$. From this, we define the PI degree of $A$ to be $n$. We now recall one of the fundamental results from the Polynomial Identity theory (cf. {\cite[Theorem 13.6.5]{mcr}}).
\begin{theo}[Posner’s Theorem]\label{pos}
 Let $A$ be a prime PI ring with center $Z(A)$ and minimal
degree $d$. Let $S=Z(A)\setminus\{0\}$, $Q=AS^{-1}$ and $F=Z(A)S^{-1}$. Then $Q$ is a central simple algebra with center $F$ and $\dime_{F}(Q)=(\frac{d}{2})^2$.
\end{theo}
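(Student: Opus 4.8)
The plan is to derive Posner's theorem from Kaplansky's structure theorem for primitive PI rings by passing to the central localization; complete details are in \cite[Theorem 13.6.5]{mcr}, so I only indicate the architecture. First I would check the elementary setup. Since $A$ is prime, $Z(A)$ is a commutative domain: if $a,b\in Z(A)$ with $ab=0$, then $aAb=Aab=0$, so primeness forces $a=0$ or $b=0$; the same computation shows that every nonzero central element is a nonzerodivisor in $A$. Hence $S=Z(A)\setminus\{0\}$ is a central Ore set of nonzerodivisors, $A$ embeds in $Q=AS^{-1}$, and $F=Z(A)S^{-1}=\cf(Z(A))$ is a field lying inside $Z(Q)$. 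Moreover $Z(Q)=F$: a central element $as^{-1}$ of $Q$ forces $a$ to commute with all of $A$, whence $a\in Z(A)$ and $as^{-1}\in F$.

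The next step is to transfer the PI information. Primeness of $Q$ is immediate, since $S$ consists of central nonzerodivisors. Every multilinear identity of $A$ remains valid in $Q$ after clearing the central denominators, and the minimal degree of any PI ring is attained by a multilinear identity (via the standard multilinearization process); thus $Q$ satisfies a multilinear identity of degree $d$, and since $A\hookrightarrow Q$ gives the reverse inequality, $Q$ also has minimal degree $d$. The crux is then that $Q$ is \emph{simple}. Given a nonzero ideal $I$ of $Q$, the set $I\cap A$ is a nonzero ideal of $A$ (from $0\neq as^{-1}\in I$ one gets $0\neq a=(as^{-1})s\in I\cap A$), and by Rowen's theorem --- a nonzero ideal of a prime PI ring meets the center nontrivially, which itself rests on the existence of central polynomials (Formanek, Razmyslov) --- there is a nonzero $c\in(I\cap A)\cap Z(A)\subseteq S$; but $c$ is invertible in $Q$, so $1\in I$ and $I=Q$. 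Hence $Q$ is simple with identity, in particular primitive, of minimal degree $d$.

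Finally, Kaplansky's theorem says that a primitive PI ring of minimal degree $d$ is a central simple algebra, finite-dimensional over its center, and $\dime_{Z(Q)}(Q)=(d/2)^2$; combined with $Z(Q)=F$ this is exactly the assertion. The numerical part relies on the Amitsur--Levitzki theorem (the minimal degree of $M_n$ over a field is $2n$) together with the invariance of minimal degree under extension of scalars to an algebraically closed field, over which a central simple algebra becomes a full matrix ring. I expect the genuinely hard content to be precisely the two imported results --- Rowen's theorem / central polynomials and Kaplansky's structure theorem; once these are granted, the remainder is routine localization bookkeeping, and the prime version recorded here is all that is needed here since $\mathcal{H}_{p,q}$ is prime.
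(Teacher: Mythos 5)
The paper does not prove Posner's theorem at all---it simply cites \cite[Theorem 13.6.5]{mcr} and records the statement for later use---so there is no in-paper argument to compare against. Your sketch is a correct outline of the standard proof as given in that reference: central localization to $Q=AS^{-1}$ with $Z(Q)=F$, transfer of a multilinear identity so that $Q$ inherits minimal degree $d$, simplicity of $Q$ via the Rowen/Formanek result that nonzero ideals of a prime PI ring meet the center, and then Kaplansky's theorem (with Amitsur--Levitzki supplying the $(d/2)^2$ count), so you and the paper are effectively taking the same route.
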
 
Note that the $Q$ in Posner’s theorem is PI and, since $Q$ is a central simple algebra, we can state its PI degree to be $\ \frac {d}{2}$ by the discussion above. Furthermore, as a result, \cite[I.13.2(6)]{brg}, $Q$ has the same minimal degree as $A$, namely $d$. Recognizing that the PI degree can be interpreted as some measure of how close to being commutative a PI-algebra is and that this, in turn, is related to its minimal degree, the definition of PI degree given above can be extended to all prime PI rings in the following way. 
\begin{defi}
The PI degree of a prime PI ring $A$ with minimal degree $d$ is $\pideg(A)=\frac{d}{2}$.
\end{defi}
Kaplansky’s Theorem (cf. \cite[ Theorem 13.3.8]{mcr}) has a striking consequence in the case
of a prime affine PI algebra over an algebraically closed field. The following result provides an
important link between the PI degree of a prime affine PI algebra over an algebraically closed
field and the $\mathbb{k}$-dimension of its irreducible representations (cf. \cite[Theorem I.13.5, Lemma III.1.2]{brg}):
\begin{prop}\label{main}
    Let $A$ be a prime affine PI algebra over an algebraically closed field $\mathbb{k}$, with PI-deg($A$) = $n$ and $V$ be a simple $A$-module. Then $V$ is a vector space over $\mathbb{k}$ of dimension $t$, where $t \leq n$, and $A/ann_A(V) \cong M_t(\mathbb{k})$. Moreover, the upper bound $\pideg(A)$ is attained by some simple $A$-modules.
\end{prop}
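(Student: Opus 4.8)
The plan is to reduce everything to the classical structure theory of primitive PI rings, namely Kaplansky's and the Amitsur--Levitzki theorems, combined with the Nullstellensatz (Zariski's lemma) for affine algebras. First I would fix a simple $A$-module $V$, put $P:=\ann_A(V)$, and pass to $\bar A:=A/P$, which is a primitive PI ring and, being a homomorphic image of the affine algebra $A$, is itself affine over $\mathbb{k}$. By Kaplansky's theorem $\bar A$ is a simple artinian ring, finite-dimensional over its centre $Z(\bar A)$, and $Z(\bar A)$ is a field. Since $Z(\bar A)$ sits inside the affine PI algebra $\bar A$, the Artin--Tate lemma shows $Z(\bar A)$ is affine over $\mathbb{k}$; a field that is affine over the algebraically closed field $\mathbb{k}$ must equal $\mathbb{k}$ by Zariski's lemma, so $Z(\bar A)=\mathbb{k}$. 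Artin--Wedderburn then gives $\bar A\cong M_t(D)$ for a division ring $D$ with $Z(D)=\mathbb{k}$ and $\dime_{\mathbb{k}}D<\infty$; a finite-dimensional division algebra over an algebraically closed field is trivial, so $D=\mathbb{k}$ and $\bar A=A/\ann_A(V)\cong M_t(\mathbb{k})$. As $V$ is a simple $M_t(\mathbb{k})$-module it is isomorphic to the column space, whence $\dime_{\mathbb{k}}V=t$.

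To obtain the bound $t\le n$ I would invoke the Amitsur--Levitzki theorem, which says the minimal degree of $M_t(\mathbb{k})$ is exactly $2t$. Every polynomial identity of $A$ descends to its quotient $\bar A\cong M_t(\mathbb{k})$, so the minimal degree $d$ of $A$ satisfies $2t\le d$. Since $\pideg(A)=d/2=n$ by definition, this yields $t\le n$, and $A/\ann_A(V)\cong M_t(\mathbb{k})$ as claimed.

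For the attainment of the upper bound I would pass to $Q=AS^{-1}$ with $S=Z(A)\setminus\{0\}$ as in Posner's Theorem~\ref{pos}: $Q$ is central simple of dimension $n^{2}$ over $F=Z(A)S^{-1}$. The substantive input is that the Azumaya locus of $A$---the set of maximal ideals $\mathfrak m$ of $Z(A)$ for which the localization of $A$ at $\mathfrak m$ is an Azumaya algebra of constant rank $n^{2}$---is a nonempty (in fact dense) open subset of the maximal spectrum of $Z(A)$; this is precisely where primeness and affineness are used, via the Artin--Procesi characterization of Azumaya algebras by identities together with the fact that the non-Azumaya locus is a proper closed set because $Q$ genuinely has PI degree $n$. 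For any $\mathfrak m$ in the Azumaya locus the fibre $A/\mathfrak mA$ is a $\mathbb{k}$-form of $M_n$ over $Z(A)/\mathfrak m=\mathbb{k}$ (again by Zariski's lemma), hence $\cong M_n(\mathbb{k})$, and its unique simple module has $\mathbb{k}$-dimension $n$.

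I expect the last step---showing the Azumaya locus is nonempty---to be the main obstacle, since the earlier parts are essentially bookkeeping with Kaplansky's and Amitsur--Levitzki's theorems and the Nullstellensatz, whereas nonemptiness of the Azumaya locus genuinely requires the Artin--Procesi machinery. For the purposes of this article it is cleanest simply to cite \cite[I.13.5, III.1.2]{brg} for this proposition, as the results there are stated in exactly the form needed.
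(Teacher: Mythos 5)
Your proposal is correct. The paper itself gives no proof of this proposition but simply cites \cite[Theorem I.13.5, Lemma III.1.2]{brg}, which is exactly the conclusion you reach at the end; your sketch via Kaplansky's theorem, Artin--Tate, Zariski's lemma, Artin--Wedderburn and Amitsur--Levitzki for the dimension bound, together with the Azumaya-locus argument (Artin--Procesi) for attainment of the bound, is precisely the standard proof underlying those cited results.
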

\begin{rema}
In the roots of unity context, the algebra $\mathcal{H}_{p,q}$ is classified as a prime affine PI algebra. Consequently, according to Proposition \ref{main}, it is quite clear that each simple $\mathcal{H}_{p,q}$-module is finite-dimensional and can have dimension at most $\pideg(\mathcal{H}_{p,q})$. Hence the computation of the PI degree of $\mathcal{H}_{p,q}$ bears substantial importance. In Section \ref{pisection}, our focus will be on computing the PI degree of $\mathcal{H}_{p,q}$.
\end{rema}
\section{PI Degree for \texorpdfstring{$\mathcal{H}_{p,q}$}{TEXT}}\label{pisection}
In this section, we find an explicit expression of the PI degree for $\mathcal{H}_{p,q}$ assuming $p$ and $q$ are primitive $m$-th and $n$-th roots of unity respectively. Let $\Gamma$ be the multiplicative subgroup of $\mathbb{k}^*$ generated by $p$ and $q$. Clearly, $\Gamma$ is a cyclic group of order $l:=\lcmu(m,n)$. Here we will use the derivation erasing process \cite[Theorem 7]{lm2} and then a key technique for calculating the PI degree of a quantum affine space \cite[Proposition 7.1]{di}.\\
\noindent \textsc{Step 1:} Recall that the algebra $\mathcal{H}_{p,q}$ has a skew polynomial presentation of the form
\[\mathbb{k}[z][x,\alpha][y,\beta,\delta].\] Note that \[\delta(\beta(x))=qz=pqp^{-1}z=pq\beta(\delta(x)).\] This holds trivially if $x$ is replaced by $z$. So the pair $(\beta,\delta)$ is a $pq$-skew derivation on $\mathbb{k}[z][x,\alpha]$ with $pq\neq 1$. Moreover, we can check that all the hypothesis of the derivation erasing process in \cite[Theorem 7]{lm2} is satisfied by the skew polynomial presentation of the PI algebra $\mathcal{H}_{p,q}$. Hence it follows that 
\begin{equation}\label{equal}
\pideg (\mathcal{H}_{p,q})= \pideg \mathcal{O}_{\Lambda}(\mathbb{k}^{3})    
\end{equation}
where the $(3\times 3)$-matrix of relations $\Lambda$ is
\begin{equation*}\label{defma}
\Lambda=\begin{pmatrix}
1&p^{-1}&p\\
p&1&q^{-1}\\
p^{-1}&q&1
\end{pmatrix}.
\end{equation*}
\noindent\textsc{Step 2:} Suppose $g$ is a generator of the cyclic group $\Gamma$. Then we can choose $s_1=\frac{n}{\gcdi(m,n)}$ and $s_2=\frac{m}{\gcdi(m,n)}$ 
such that $\langle p\rangle=\langle g^{s_1}\rangle$ and $\langle q\rangle=\langle g^{s_2}\rangle$. Therefore there exist non-negative integers $k_1<m,k_2<n$ with $\gcdi(k_1,m)=\gcdi(k_2,n)=1$ such that 
\begin{equation}\label{cypq}
    p=g^{s_1k_1}\ \ \text{and}\ \ q=g^{s_2k_2}.
\end{equation} Since $pq\neq 1$, so we have $s_1k_1+s_2k_2\not\equiv 0~(\mo l)$ and hence $(k_1,k_2)\neq (0,0)$. Thus the integral matrix associated with $\Lambda$ is
\begin{equation*}\label{defma1}
H=\begin{pmatrix}
0&-s_1k_1&s_1k_1\\
s_1k_1&0&-s_2k_2\\
-s_1k_1&s_2k_2&0
\end{pmatrix}.
\end{equation*}
One of the key techniques for calculating the PI degree of a quantum affine space was first introduced in \cite[Proposition 7.1]{di} and later simplified with respect to invariant factors in \cite[Lemma 5.7]{ar}. As $H$ is a $(3\times 3)$-skew-symmetric matrix, so $\det(H)=0$ and the rank of $H$ is $2$. Suppose $h_1\divides h_1$ are invariant factors for $H$. Therefore we have \[h_1=\text{first~determinantal~divisor}=\gcdi(s_1k_1,s_2k_2)\]
\noindent\textsc{Step 3:} To obtain the $\pideg\mathcal{O}_{\Lambda}(\mathbb{k}^3)$, we now apply the result \cite[Lemma 5.7]{ar} in this context as below
\begin{equation}\label{pid7}
  \left(\pideg\mathcal{O}_{\Lambda}(\mathbb{k}^3)\right)^2=\frac{l}{\gcdi(h_1,l)}\times \frac{l}{\gcdi(h_1,l)}.  
\end{equation}
Finally, we claim that $\gcdi(h_1,l)=1$. Suppose $\gcdi(h_1,l)=d$. Then $d$ divides $s_1k_1,s_2k_2$ and $l$. Now we simplify the following
\[p^{\frac{l}{d}}=g^{\frac{s_1k_1l}{d}}=1\ \text{and}\ q^{\frac{l}{d}}=g^{\frac{s_2k_2l}{d}}=1.\] This implies that both $m$ and $n$ divide $l/d$ and hence $l$ divides $l/d$. Therefore the claim follows. Thus from (\ref{equal}) and (\ref{pid7}), we have now proved that
\begin{theo}
Suppose that $p$ and $q$ are primitive $m$-th and $n$-th roots of unity, respectively. Then the PI degree of $\mathcal{H}_{p,q}$ is given by $\pideg(\mathcal{H}_{p,q})=l$, where $l=\lcmu(m,n)$.
\end{theo}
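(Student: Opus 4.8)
The plan is to reduce the computation of $\pideg(\mathcal{H}_{p,q})$ to that of a quantum affine space, for which an invariant-factor formula is available, and then to carry out the resulting arithmetic. First I would use the iterated skew polynomial presentation $\mathcal{H}_{p,q}=\mathbb{k}[z][x,\alpha][y,\beta,\delta]$ recorded in Subsection \ref{basic} and check the hypotheses of the derivation erasing process of \cite[Theorem 7]{lm2}. The key point is that $(\beta,\delta)$ is a $pq$-skew derivation with $pq\neq 1$: on the generator $x$ one has $\delta(\beta(x))=qz=pq\,\beta(\delta(x))$, and the identity is trivial on $z$. Granting the remaining (routine) hypotheses, the derivation can be erased without changing the PI degree, so $\pideg(\mathcal{H}_{p,q})=\pideg\mathcal{O}_{\Lambda}(\mathbb{k}^3)$, where $\Lambda$ is the $3\times 3$ multiplicatively antisymmetric matrix built from the commutation relations among $z,x,y$ after erasure (namely $xz=pzx$, $yz=p^{-1}zy$, $yx=qxy$).

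Next I would translate the problem into integer-matrix language. Let $\Gamma=\langle p,q\rangle\leq\mathbb{k}^*$, a cyclic group of order $l=\lcmu(m,n)$, and fix a generator $g$ of $\Gamma$; writing $s_1=n/\gcdi(m,n)$ and $s_2=m/\gcdi(m,n)$, one can choose $k_1,k_2$ with $\gcdi(k_1,m)=\gcdi(k_2,n)=1$ and $p=g^{s_1k_1}$, $q=g^{s_2k_2}$. The integral matrix $H$ associated with $\Lambda$ is then the $3\times 3$ skew-symmetric matrix with off-diagonal entries $\pm s_1k_1$ and $\pm s_2k_2$. Because $H$ is $3\times 3$ and skew-symmetric it has $\det(H)=0$ and rank $2$, so its only nontrivial invariant factor is $h_1=\gcdi(s_1k_1,s_2k_2)$, the first determinantal divisor. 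The formula of \cite[Lemma 5.7]{ar} (a streamlined form of \cite[Proposition 7.1]{di}) then gives $\bigl(\pideg\mathcal{O}_{\Lambda}(\mathbb{k}^3)\bigr)^2=\bigl(l/\gcdi(h_1,l)\bigr)^2$.

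The only step with genuine content is the claim $\gcdi(h_1,l)=1$, and this is where I expect the (small) real work to lie. Put $d=\gcdi(h_1,l)$. Then $d$ divides $s_1k_1$, $s_2k_2$ and $l$, so $l$ divides both $(s_1k_1)l/d$ and $(s_2k_2)l/d$, whence $p^{l/d}=g^{(s_1k_1)l/d}=1$ and $q^{l/d}=g^{(s_2k_2)l/d}=1$. Thus $m\mid l/d$ and $n\mid l/d$, so $l=\lcmu(m,n)$ divides $l/d$, forcing $d=1$. Feeding this back into the displayed formula gives $\pideg\mathcal{O}_{\Lambda}(\mathbb{k}^3)=l$, and then the derivation-erasing identity yields $\pideg(\mathcal{H}_{p,q})=l$. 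The hypothesis $pq\neq 1$ enters only to make the derivation-erasing theorem applicable and to ensure $(k_1,k_2)\neq(0,0)$, so that $H$ is genuinely of rank $2$. The main obstacle I foresee is purely bookkeeping: verifying carefully that all hypotheses of the derivation erasing theorem hold for this particular iterated skew polynomial presentation, and correctly reading off the matrix $\Lambda$ (equivalently $H$) after erasure; the subsequent arithmetic is short.
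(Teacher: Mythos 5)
Your proposal follows the paper's proof essentially verbatim: the same derivation-erasing reduction via \cite[Theorem 7]{lm2} to the quantum affine space $\mathcal{O}_{\Lambda}(\mathbb{k}^3)$, the same skew-symmetric integral matrix $H$ with invariant factor $h_1=\gcdi(s_1k_1,s_2k_2)$, the same application of \cite[Lemma 5.7]{ar}, and the same argument that $\gcdi(h_1,l)=1$ by showing $m$ and $n$ both divide $l/d$. No substantive differences to report.
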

\section{The Center of \texorpdfstring{$\mathcal{H}_{p,q}$}{TEXT}}\label{sec4}
In this section, we focus on computing the center of $\mathcal{H}_{p,q}$ in the roots of unity context. Let $p,q\in\mathbb{k}^*$ with $pq\neq 1$ be such that $\ord(p)=m$, and $\ord(q)=n$. Let $l$ denote the least common multiple of $m$ and $n$. Then it is easy to check using the defining relations and Lemma \ref{identity} that the elements $z^m,\theta^n,x^l$, and $y^l$ are in the center. Now we aim to establish an algebraic dependence relationship of $\theta^k$ for any $k\geq 1$. First, observe that $\theta=(q-p^{-1})xy+z$. Then we can simplify the expression $\theta^k$ using the following fact:
\begin{itemize}
    \item [(1)] The elements $z$ and $xy$ commute.
    \item[(2)] For any $r\geq 1$, $(x^ry^r)(xy)=q^rx^{r+1}y^{r+1}+p^r[r]_{p,q}zx^ry^r$.
\end{itemize}
Finally, we obtain the following identity, for any $k\geq 1$:
\begin{equation}\label{ident}
\theta^k= (q-p^{-1})^kq^{\frac{k(k-1)}{2}}x^ky^k+\sum\limits_{r=1}^{k-1}c^{(k)}_rx^ry^r+z^k 
\end{equation}
where each $c^{(k)}_r$ is an element in $\mathbb{k}[z]$. 
Thus the identity (\ref{ident}) for $k=n$ provides a central element that in general not lie in the subalgebra generated by $z^m,x^l$, and $y^l$.
\par Denote the lexicographical order on $\mathbb{Z}^2_{+}$ by $(a',b')<(a,b)$, that means $a'<a$, or $a'=a$ and $b'<b$. For any $0\neq f\in \mathcal{H}_{p,q}$, we may uniquely write 
\[f=f_{uv}~x^uy^v+\sum\limits_{(a,b)<(u,v)}f_{ab}~x^ay^b\]
where each $f_{uv}\neq 0$ and $f_{ab}$ is an element in the subalgebra $\mathbb{k}[z]$. Denote the degree of $f$ by $\deg(f)=(u,v)$.
\par In the following, we compute the center of $\mathcal{H}_{p,q}$ when $\ord(p)=m$ and $\ord(q)=n$. Recall from Section \ref{pisection} that the cyclic group $\Gamma=\langle g\rangle$ generated by $p$ and $q$ is of order $l$ with $p=g^{s_1k_1}$ and $q=g^{s_2k_2}$ where $s_i,k_i$ are as defined in (\ref{cypq}). Clearly, the subgroup $\langle p\rangle \cap \langle q\rangle $ is cyclic with order $\gcdi(m,n)$. It follows from the equality $l=s_1s_2\gcdi(m,n)$ that the element $g^{s_1s_2}$ is a generator of this subgroup. Note that $\gcdi(m,n)$ is relatively prime to $\lcmu(k_1,k_2)$. This implies that $g^{s_1s_2\lcmu(k_1,k_2)}$ is also a generator of this subgroup. Thus we can write: 
\[g^{s_1s_2\lcmu(k_1,k_2)}=p^{\frac{s_2k_2}{\gcdi(k_1,k_2)}}=q^{\frac{s_1k_1}{\gcdi(k_1,k_2)}},\] where $k_1$ and $k_2$ are two parameters as in (\ref{cypq}) that parameterize the elements $p$ and $q$, respectively, in $\Gamma$. Take 
\begin{equation}\label{rs}
r:=\frac{s_2k_2}{\gcdi(k_1,k_2)}~(\mo m)\ \ ~~~~\text{and}~~~~\  s:=\frac{s_1k_1}{\gcdi(k_1,k_2)}~(\mo n)
\end{equation}
Thus we obtain a pair $(r,s)$ of non-negative integers satisfying $p^r=q^s$. Therefore all pairs $(a,b)$ of non-negative solutions for $p^a=q^b$ can be constructed from $(r,s)$ and are given by 
\begin{equation}\label{sol}
(a,b)=(rt+m\mathbb{Z}_{\geq 0},st+n\mathbb{Z}_{\geq 0}),\ t\in \mathbb{Z}_{\geq 0}.
\end{equation}
For such pair $(r,s)$, let us denote $\Omega:=z^r\theta^s$. Then we can check that the element $\Omega$ is in the center of $\mathcal{H}_{p,q}$. 
\begin{theo}
    The center of $\mathcal{H}_{p,q}$ is the $\mathbb{k}$-algebra generated by $z^m,\theta^n,x^l,y^l$, and $\Omega$. 
\end{theo}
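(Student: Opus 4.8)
The plan is to prove the two inclusions separately. The inclusion $\mathbb{k}\langle z^m,\theta^n,x^l,y^l,\Omega\rangle\subseteq Z(\mathcal{H}_{p,q})$ is routine: the elements $z^m,\theta^n,x^l,y^l$ are central by the discussion at the beginning of this section (see also Corollary \ref{cp1}), and $\Omega=z^r\theta^s$ commutes with $x,y,z$ by a direct check using $\theta z=z\theta$, $\theta x=qx\theta$, $\theta y=q^{-1}y\theta$ together with $p^r=q^s$. Thus these five elements generate a commutative subalgebra $C\subseteq Z(\mathcal{H}_{p,q})$, and the content of the theorem is the reverse inclusion $Z(\mathcal{H}_{p,q})\subseteq C$.

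To prove $Z(\mathcal{H}_{p,q})\subseteq C$, I would first put the $\mathbb{Z}$-grading on $\mathcal{H}_{p,q}$ determined by $\deg x=1$, $\deg y=-1$, $\deg z=0$ (hence $\deg\theta=0$); the defining relations are homogeneous, so $\mathcal{H}_{p,q}=\bigoplus_{d}\mathcal{H}_d$, with $\mathcal{H}_d$ spanned by the basis monomials $z^ix^jy^k$ of Proposition \ref{basish} satisfying $j-k=d$. Testing the centrality of $f=\sum_d f_d$ against homogeneous elements and comparing degrees shows that each homogeneous component $f_d$ is itself central. From the relations, a homogeneous element $u$ of degree $d$ satisfies $zu=p^{-d}uz$ and $\theta u=q^{d}u\theta$; substituting a nonzero central $f_d$ and using that $\mathcal{H}_{p,q}$ is a domain forces $p^{d}=q^{d}=1$, that is $l\mid d$. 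So it remains to determine $Z(\mathcal{H}_{p,q})\cap\mathcal{H}_d$ for each $d$ with $l\mid d$.

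The crux is the degree-zero part. I would first show $\mathcal{H}_0=\mathbb{k}[z,\theta]$: since $z$ commutes with $xy$ and $(x^ry^r)(xy)=q^rx^{r+1}y^{r+1}+p^r[r]_{p,q}zx^ry^r$ (the identity noted just before \eqref{ident}), induction on $r$ places each $x^ry^r$, hence each $z^ix^ry^r$, into $\mathbb{k}[z,xy]=\mathbb{k}[z,\theta]$, and these monomials span $\mathcal{H}_0$; the reverse inclusion being obvious, this gives the equality. In the commutative ring $\mathbb{k}[z,\theta]$---where $z$ and $\theta$ are algebraically independent because \eqref{ident} is triangular against the PBW basis---a central element $h=\sum c_{ij}z^i\theta^j$ commutes with $x$; since $z^i\theta^jx=p^{-i}q^jxz^i\theta^j$ and $\mathcal{H}_{p,q}$ is a domain, this forces $c_{ij}=0$ whenever $p^i\neq q^j$. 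By \eqref{sol}, every pair $(i,j)$ with $p^i=q^j$ gives $z^i\theta^j=\Omega^{a}(z^m)^{b}(\theta^n)^{c}$ for suitable $a,b,c\geq0$, so $Z(\mathcal{H}_{p,q})\cap\mathcal{H}_0=\mathbb{k}[z^m,\theta^n,\Omega]\subseteq C$.

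Finally I would reduce the general graded piece to the degree-zero one. For a central $f\in\mathcal{H}_d$ with $d\geq0$ (so $l\mid d$), write $f=\sum_k g_k(z)x^{d+k}y^k$ with $g_k\in\mathbb{k}[z]$; since $p^{d}=1$, the element $x^{d}$ commutes with $\mathbb{k}[z]$, whence $f=x^{d}h$ with $h=\sum_k g_k(z)x^ky^k\in\mathcal{H}_0$. As $x^{d}=(x^l)^{d/l}$ is central and $\mathcal{H}_{p,q}$ is a domain, $h$ is forced to be central, so $h\in\mathbb{k}[z^m,\theta^n,\Omega]$ and $f=(x^l)^{d/l}h\in C$. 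The case $d\leq0$ is entirely symmetric, now factoring $(y^l)^{-d/l}$ out on the right. Assembling the homogeneous components yields $Z(\mathcal{H}_{p,q})=C$. I expect the degree-zero analysis---establishing $\mathcal{H}_0=\mathbb{k}[z,\theta]$ and pinning down its central part via \eqref{sol}---to be the main obstacle; once that is in hand, the grading reduction and the factoring-out argument are routine bookkeeping.
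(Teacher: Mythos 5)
Your argument is correct, and it takes a genuinely different route from the paper. The paper argues by a minimal counterexample: writing a hypothetical central element $f\notin S$ with lexicographically minimal leading term $f_{uv}x^uy^v$, it extracts the constraints $q^v=p^k$, $p^u=q^{s't}$-type conditions from $xf=fx$, $yf=fy$, $zf=fz$, and then uses identity \eqref{ident} to subtract a suitable multiple of $x^{u-v}\theta^v$ or $\theta^uy^{v-u}$, contradicting minimality. You instead exploit the $\mathbb{Z}$-grading with $\deg x=1$, $\deg y=-1$, $\deg z=\deg\theta=0$ (under which the defining relations are indeed homogeneous): central elements split into central homogeneous components, the relations $zu=p^{-d}uz$, $\theta u=q^{d}u\theta$ force $l\mid d$, the identity preceding \eqref{ident} shows $\mathcal{H}_0=\mathbb{k}[z,\theta]$, commutation with $x$ pins the support of a central degree-zero element to pairs with $p^i=q^j$, and \eqref{sol} converts these into monomials in $z^m,\theta^n,\Omega$; higher components reduce to degree zero by factoring out the central elements $(x^l)^{d/l}$ or $(y^l)^{-d/l}$ and cancelling in the domain. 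Both proofs ultimately rest on the same two ingredients -- the parametrization \eqref{sol} of solutions of $p^a=q^b$ and the triangular relation between $x^ky^k$ and $\theta^k$ -- but your graded approach buys more: it identifies the degree-zero subalgebra $\mathcal{H}_0=\mathbb{k}[z,\theta]$ and exhibits the center as a graded object with an explicit spanning set, whereas the paper's leading-term induction is shorter and avoids setting up the grading formalism.
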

\begin{proof}
Let $S$ denote the $\mathbb{k}$-algebra generated by $z^m,\theta^n,x^l,y^l$, and $\Omega$. It is evident that $S$ is a central subalgebra. Suppose 
\[f=f_{uv}~x^uy^v+\sum\limits_{(a,b)<(u,v)}f_{ab}~x^ay^b\] is in ${Z}(\mathcal{H}_{p,q})\setminus S$ with the minimal degree $(u,v)$. 
Then we simplify the equality $xf=fx$ to obtain
    \[0=xf-fx=\left(xf_{uv}-q^vf_{uv}x\right)x^uy^v+\ \ \text{lower degree terms}.\]
This implies $xf_{uv}=q^vf_{uv}x$. From this, we can deduce that $q^v=p^k$ for all $k$ in the support of $f_{uv}$. Therefore from (\ref{sol}) we have $v=st+nt'$ and $k=rt+mt_k$ for some $t,t',t_k\in \mathbb{Z}_{\geq 0}$ and for all $k$ in the support of $f_{uv}$. Thus $f_{uv}$ is a product of $z^{rt}$ with a polynomial in $z^m$.
\par Similarly, from the equality $yf=fy$, we obtain $u=st+nt''$ for some $t''\in \mathbb{Z}_{\geq 0}$. Hence $n$ divides $(u-v)$. Then from $zf=fz$ we have $p^{u-v}=1$, which implies $m$ divides $(u-v)$. Now if $u\geq v$, then using (\ref{ident}), the \[f-(q-p^{-1})^{-v}q^{\frac{-v(v-1)}{2}}f_{uv}~x^{u-v}\theta^v\] is a nonzero element in ${Z}(\mathcal{H}_{p,q})\setminus S$, with degree less than $(u,v)$. On the other hand, if $v>u$, then using (\ref{ident}), the \[f-(q-p^{-1})^{-u}q^{\frac{-u(u-1)}{2}}f_{uv}~\theta^uy^{v-u}\] is a nonzero element in ${Z}(\mathcal{H}_{p,q})\setminus S$, with degree less than $(u,v)$. Thus either case contradicts the fact that the degree $(u,v)$ of $f$ is minimal. This completes the proof.  
\end{proof}

\begin{coro}
(1) When $\gcdi(m,n)=1$, the pair $(r,s)$ in (\ref{rs}) will be equal to $(0,0)$ and hence $\Omega=1$. Therefore the center of $\mathcal{H}_{p,q}$ is the $\mathbb{k}$-algebra generated by $z^m,\theta^n,x^{mn}$ and $y^{mn}$.
\par (2) When $p=1$, the center of $\mathcal{H}_{1,q}$ is generated by the $z,\theta^n,x^n$, and $y^n$. Moreover rearranging the identity (\ref{ident}) for $k=n$ we have
 \[\theta^n-(q-1)^nq^{\frac{n(n-1)}{2}}x^ny^n-z^n=\sum\limits_{r=1}^{n-1}c^{(n)}_rx^ry^r.\] As the left-hand side is central, we can conclude that $c^{(n)}_r=0$ for $1\leq r\leq n-1$. Therefore we obtain 
 \begin{equation}\label{tiden}
     \theta^n=(q-1)^nq^{\frac{n(n-1)}{2}}x^ny^n+z^n.
 \end{equation} Thus the center of $\mathcal{H}_{1,q}$ is the polynomial algebra generated by $z,x^n$, and $y^n$.
 \par (3) When $q=1$, the center of $\mathcal{H}_{p,1}$ is generated by $z^m,\theta,x^m$ and $y^m$. We now define a $\mathbb{k}$-linear map $\psi:\mathcal{H}_{p,1}\rightarrow \mathcal{H}_{1,p^{-1}}$ by
\[\psi(x)=x,\ \psi(y)=y,\ \psi(z)=\theta,\ \psi(\theta)=z.\] Then we can easily verify that $\psi$ is an isomorphism. Using (\ref{tiden}) in $\mathcal{H}_{1,p^{-1}}$ and the above isomorphism, we obtain
\[z^m=(p^{-1}-1)^mp^{\frac{-m(m-1)}{2}}x^my^m+\theta^m \ \ \text{in}\ \ \mathcal{H}_{p,1}.\] Thus the center of $\mathcal{H}_{p,1}$ is the polynomial algebra generated by $\theta,x^m$, and $y^m$.
\end{coro}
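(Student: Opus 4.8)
The plan is to deduce all three parts from the theorem computing $Z(\mathcal{H}_{p,q})$ --- the subalgebra generated by $z^m,\theta^n,x^l,y^l$ and $\Omega=z^r\theta^s$ --- together with the expansion (\ref{ident}) of $\theta^k$; parts (2) and (3) are the degenerate cases $m=1$ and $n=1$, in which the point is to rewrite one of these generators as a polynomial in the others via (\ref{tiden}). For part (1), I would first unwind the definitions of $s_1,s_2,k_1,k_2$ when $\gcdi(m,n)=1$: then $s_1=n$ and $s_2=m$, so (\ref{rs}) gives $r\equiv mk_2/\gcdi(k_1,k_2)\equiv 0~(\mo m)$ and $s\equiv nk_1/\gcdi(k_1,k_2)\equiv 0~(\mo n)$, whence $(r,s)=(0,0)$ and $\Omega=z^0\theta^0=1$. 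Since also $l=\lcmu(m,n)=mn$, the theorem immediately gives that $Z(\mathcal{H}_{p,q})$ is generated by $z^m,\theta^n,x^{mn},y^{mn}$.

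For part (2), when $p=1$ we have $m=1$, so $\gcdi(m,n)=1$ and part (1) applies: $Z(\mathcal{H}_{1,q})$ is generated by $z=z^m$, $\theta^n$, $x^n$, $y^n$. For the sharper statement I would set $k=n$ and $p=1$ in (\ref{ident}) and rearrange, so that $f:=\sum_{r=1}^{n-1}c^{(n)}_rx^ry^r$ equals $\theta^n-(q-1)^nq^{n(n-1)/2}x^ny^n-z^n$; the latter is central since $z,\theta^n,x^n,y^n$ all lie in the center by part (1), hence $f$ is central. Then I claim $f=0$: otherwise, writing $\deg(f)=(r_0,r_0)$ with $1\le r_0\le n-1$, the relation $xf=fx$ forces $xc^{(n)}_{r_0}=q^{r_0}c^{(n)}_{r_0}x$ exactly as in the proof of the theorem, and since $x$ commutes with $\mathbb{k}[z]$ once $p=1$ this yields $(1-q^{r_0})c^{(n)}_{r_0}=0$ in the domain $\mathcal{H}_{1,q}$, impossible as $q$ is a primitive $n$-th root of unity with $1\le r_0\le n-1$. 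Thus every $c^{(n)}_r$ vanishes --- this is (\ref{tiden}) --- so $\theta^n\in\mathbb{k}[z,x^n,y^n]$ and $Z(\mathcal{H}_{1,q})=\mathbb{k}[z,x^n,y^n]$; this is a polynomial algebra because the monomials $z^ax^{nb}y^{nc}$ lie in the PBW basis of Proposition \ref{basish} and so are linearly independent.

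For part (3), when $q=1$ we have $n=1$, and part (1) gives $Z(\mathcal{H}_{p,1})$ generated by $z^m,\theta^n=\theta,x^m,y^m$. Next I would verify that $\psi:\mathcal{H}_{p,1}\to\mathcal{H}_{1,p^{-1}}$ with $x\mapsto x$, $y\mapsto y$, $z\mapsto\theta$ is a well-defined algebra homomorphism, by checking that each of the three defining relations of $\mathcal{H}_{p,1}$ maps to an identity of $\mathcal{H}_{1,p^{-1}}$; the crux is that in $\mathcal{H}_{1,p^{-1}}$ the element $\theta$ satisfies $\theta z=z\theta$, $\theta x=p^{-1}x\theta$, $\theta y=py\theta$ and $yx-xy=\theta$, which are exactly the relations demanded of the image of $z$ (recall $q=1$ in $\mathcal{H}_{p,1}$). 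The reverse assignment $x\mapsto x$, $y\mapsto y$, $z\mapsto\theta$ is likewise a well-defined homomorphism and a two-sided inverse, so $\psi$ is an isomorphism interchanging $z$ and $\theta$ and fixing $x,y$. Transporting (\ref{tiden}) from $\mathcal{H}_{1,p^{-1}}$ (where the role of $q$ is played by $p^{-1}$ and that of $n$ by $\ord(p^{-1})=m$) through $\psi^{-1}$ then yields $z^m=(p^{-1}-1)^mp^{-m(m-1)/2}x^my^m+\theta^m$ in $\mathcal{H}_{p,1}$, so $z^m\in\mathbb{k}[\theta,x^m,y^m]$ and $Z(\mathcal{H}_{p,1})=\mathbb{k}[\theta,x^m,y^m]$; the latter is a polynomial algebra because $\psi$ carries it isomorphically onto $Z(\mathcal{H}_{1,p^{-1}})=\mathbb{k}[z,x^m,y^m]$, a polynomial algebra by part (2).

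The only step that is not routine bookkeeping is the vanishing argument in part (2) --- that a central element of the special shape $\sum_{r=1}^{n-1}c_rx^ry^r$ must be zero --- and, relatedly, confirming that the generating sets produced are genuinely polynomial (and not merely commutative) algebras; both, however, are short once the lexicographic-degree calculus from the proof of the theorem and the PBW basis are available.
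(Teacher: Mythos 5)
Your proposal is correct and follows essentially the same route as the paper's: part (1) by unwinding the definitions of $(r,s)$ and noting $l=mn$, part (2) by specializing the identity (\ref{ident}) and killing the intermediate terms via the leading-coefficient/centrality argument from the theorem, and part (3) by the isomorphism $\psi$ swapping $z$ and $\theta$. The only difference is one of completeness: the paper states ``we can conclude that $c^{(n)}_r=0$'' and ``we can easily verify that $\psi$ is an isomorphism'' without elaboration, whereas you spell out the lex-degree contradiction $q^{r_0}=1$ for $1\le r_0\le n-1$ and check each defining relation under $\psi$ --- both verifications are correct and match what the paper's proof of the main theorem already sets up.
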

\section{Construction of Simple Modules over \texorpdfstring{$\mathcal{H}_{p,q}$}{TEXT}}\label{sec5}
In this section, we construct simple modules over $ \mathcal{H}_{p,q} $ depending on some scalar parameters. Let $p$ and $q$ be primitive $m$-th and $n$-th roots of unity, respectively, and let $l$ denote the least common multiple of $m$ and $n$.
\subsection{Simple Modules of Type \texorpdfstring{$\mathcal{V}_1(\mu,\lambda,\gamma)$}{TEXT}} Given $(\mu,\lambda,\gamma)\in (\mathbb{k}^*)^3$, let $\mathcal{V}_1(\mu,\lambda,\gamma)$ denote the $\mathbb{k}$-vector space with basis $\{v_k~|~0\leq k\leq l-1\}$. We define the $\mathcal{H}_{p,q}$-module structure on $\mathcal{V}_1(\mu,\lambda,\gamma)$ with action given by
\begin{align}
\ v_kz&=p^k\lambda v_k,\ \ v_kx=\begin{cases}
    \mu v_{k+1},& 0\leq k\leq l-2\\
    \mu v_0,&k=l-1
\end{cases} \label{1}\\
\ v_ky&=\begin{cases}
    \mu^{-1}q^{-k}\displaystyle\frac{pq\gamma-(pq)^k\lambda}{pq-1}v_{k-1},& 1\leq k\leq l-1\\
    \mu^{-1}\displaystyle\frac{pq\gamma-\lambda}{pq-1}v_{l-1},&k=0
\end{cases}\label{2}
\end{align}
We shall see that (\ref{1}) and (\ref{2}) does indeed define an $\mathcal{H}_{p,q}$-module. For $0\leq k\leq l-1$,
\[v_k(zx-p^{-1}xz)=p^k\lambda v_kx-p^{-1}\mu v_{k+1}z=p^k\lambda \mu v_{k+1}-p^{-1}\mu p^{k+1}\lambda v_{k+1}=0,\] Similarly, $v_k(zy-pyz)=0$. Finally,
\begin{align*}
    v_k(yx-qxy-z)&=q^{-k}\displaystyle\frac{pq\gamma-(pq)^k\lambda}{pq-1}v_k-q^{-k}\displaystyle\frac{pq\gamma-(pq)^{k+1}\lambda}{pq-1}v_k-p^k\lambda v_k\\
    &=q^{-k}\displaystyle\frac{(pq)^{k+1}\lambda-(pq)^k\lambda}{pq-1}v_k-p^k\lambda v_k=0.
\end{align*}
Thus $\mathcal{V}_1(\mu,\lambda,\gamma)$ is a module over $\mathcal{H}_{p,q}$. It is important to note that the action of the normal element $\theta=yx-p^{-1}xy$ on $\mathcal{V}_1(\mu,\lambda,\gamma)$ is $v_k\theta=q^{-k}\gamma v_k$ for all $0\leq k\leq l-1$.
\begin{theo}\label{t1}
    The $\mathcal{H}_{p,q}$-module $\mathcal{V}_1(\mu,\lambda,\gamma)$ is a simple module of dimension $l$.
\end{theo}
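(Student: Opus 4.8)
The dimension statement is immediate from the definition of $\mathcal{V}_1(\mu,\lambda,\gamma)$, so the content is simplicity. The plan is to use that $z$ and $\theta$ act diagonally on the given basis and that together they separate the basis vectors, after which a single application of $x$ closes everything up.

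First I would record the joint $(z,\theta)$-spectrum. By construction $v_kz=p^k\lambda\,v_k$, and by the computation in the paragraph preceding the statement $v_k\theta=q^{-k}\gamma\,v_k$, for $0\le k\le l-1$; thus in the basis $\{v_k\}$ the operators $z_M\colon v\mapsto vz$ and $\theta_M\colon v\mapsto v\theta$ are simultaneously diagonal, with $v_k$ carrying the joint eigenvalue $(p^k\lambda,\,q^{-k}\gamma)$. The key observation, and the only place the hypotheses really enter, is that these joint eigenvalues are pairwise distinct: if $(p^k\lambda,q^{-k}\gamma)=(p^j\lambda,q^{-j}\gamma)$ then $p^{k-j}=q^{k-j}=1$, so $m\mid k-j$ and $n\mid k-j$, hence $l=\lcmu(m,n)\mid k-j$, and since $0\le j,k\le l-1$ this forces $j=k$. (Note that neither $z$ nor $\theta$ alone suffices, as $p^k$ and $q^{-k}$ repeat with periods $m,n$, which are $<l$ in general; it is precisely $l=\lcmu(m,n)$ that makes the pair multiplicity-free.) Consequently the joint eigenspaces of $(z_M,\theta_M)$ are exactly the lines $\mathbb{k}v_k$.

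Then, given a nonzero submodule $W\subseteq\mathcal{V}_1(\mu,\lambda,\gamma)$, I would argue as follows: since $z,\theta\in\mathcal{H}_{p,q}$, $W$ is stable under the commuting diagonalizable operators $z_M$ and $\theta_M$, hence is the direct sum of its intersections with their joint eigenspaces, i.e.\ $W=\bigoplus_{k\in I}\mathbb{k}v_k$ for some nonempty $I\subseteq\{0,\dots,l-1\}$. Fixing $k\in I$ gives $v_k\in W$; since $\mu\ne0$, the relations (\ref{1}) show that $x_M$ cyclically permutes the lines $\mathbb{k}v_0,\dots,\mathbb{k}v_{l-1}$, so applying $x$ repeatedly to $v_k$ produces a nonzero multiple of every $v_j$, giving $v_j\in W$ for all $j$ and hence $W=\mathcal{V}_1(\mu,\lambda,\gamma)$. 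I do not expect a real obstacle here: once the joint spectrum is seen to be multiplicity-free, the rest is the standard ``commuting semisimple operators plus one operator acting transitively on the eigenlines'' argument; the only step demanding any care is the distinctness of the joint eigenvalues, which rests on $l=\lcmu(m,n)$.
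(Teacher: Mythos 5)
Your proof is correct and follows essentially the same strategy as the paper: both hinge on the joint action of $z$ and $\theta$ having a multiplicity-free spectrum on the basis $\{v_k\}$ (which is exactly where $l=\lcmu(m,n)$ enters), combined with $x$ cyclically permuting the eigenlines. The only difference is presentational — you invoke the standard fact that a subspace invariant under commuting diagonalizable operators is the direct sum of its intersections with the joint eigenlines, whereas the paper unrolls that same fact as an explicit induction on the number of nonzero coefficients of an element of the submodule, splitting into the cases $k\not\equiv s \pmod m$ (use $z$) and $k\equiv s \pmod m$ (use $\theta$).
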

\begin{proof}
Suppose that $W$ is a nonzero submodule of $\mathcal{V}_1(\mu,\lambda,\gamma)$. We claim that $W=\mathcal{V}_1(\mu,\lambda,\gamma)$. Let $w=\sum\limits_{i=0}^{l-1}\xi_iv_i\in W$, with $\xi_i\in \mathbb{k}$, be a nonzero element of $W$. Suppose there are two nonzero scalars, say $\xi_k,\xi_s$ in the expression of $w$. Since $W$ is a submodule then the vectors $wz$ and $w\theta$ are in $W$, where
\[wz=\sum\limits_{i=0}^{l-1}\xi_ip^{i}\lambda v_i\ \ \text{and}\ \ w\theta=\sum\limits_{i=0}^{l-1}\xi_iq^{-i}\gamma v_i.\]
\noindent\textsc{Case 1:} Assume that $k\not\equiv s~(\mo~m)$. Then we consider $wz-p^k\lambda w\in W$, where
\[wz-p^{k}\lambda w=\sum\limits_{i=0,i\neq k}^{l-1}(p^{i}-p^{k})\lambda\xi_iv_i.\] This is nonzero since $m$ does not divide $(k-s)$ and $\lambda\xi_s\neq 0$.\\
\noindent\textsc{Case 2:} Assume that $k\equiv s~(\mo m)$. Then we consider $w\theta-q^{-k}\gamma w\in W$, where
\[w\theta-q^{-k}\gamma w=\sum\limits_{i=0,i\neq k}^{l-1}(q^{-i}-q^{-k})\gamma\xi_iv_i.\] If $w\theta-q^{-k}\gamma w=0$, then we obtain $q^{-s}=q^{-k}$. This implies $n$ divides $(k-s)$. Now by our assumption, $l$ divides $(k-s)$. This contradicts the fact that $k$ and $s$ are distinct indices. Therefore $w\theta-q^{-k}\gamma w$ is nonzero.
\par Thus in either case $(wz-p^{k}\lambda w)$ or $(w\theta-q^{-k}\gamma w)$ is a nonzero element in $W$ of smaller length than $w$. Hence by induction, there must exist some $0\leq i\leq l-1$ such that $v_i\in W$. Since the action of $x$ permutes all the basis vectors, we have $W=\mathcal{V}_1(\mu,\lambda,\gamma)$.
\end{proof}
\subsection{Simple Modules of Type \texorpdfstring{$\mathcal{V}_2(\mu,\lambda)$}{TEXT}} Given $(\mu,\lambda)\in (\mathbb{k}^*)^2$, let $\mathcal{V}_2(\mu,\lambda)$ denote the $\mathbb{k}$-vector space with basis $\{v_k~|~0\leq k\leq l-1\}$. We define the action of the generators of $\mathcal{H}_{p,q}$ on $\mathcal{V}_2(\mu,\lambda)$ as follows
\begin{align}
v_kz&=p^{-k}\lambda v_k,\ \ v_ky=\begin{cases}
    \mu v_{k+1},& 0\leq k\leq l-2\\
    \mu v_0,&k=l-1
\end{cases} \label{3}\\
v_kx&=\begin{cases}
    \mu^{-1}\lambda[k]_{p,q} v_{k-1},& 1\leq k\leq l-1\\
    0,&k=0
\end{cases}\label{4}
\end{align}
It is easy to verify that the above action establishes a $\mathcal{H}_{p,q}$-module structure on $\mathcal{V}_2(\mu,\lambda)$. The action of the normal element $\theta=yx-p^{-1}xy$ on $\mathcal{V}_2(\mu,\lambda)$ is given by $v_k\theta=\lambda q^kv_k$ for all $0\leq k\leq l-1$.
\begin{theo}
The $\mathcal{H}_{p,q}$-module $\mathcal{V}_2(\mu,\lambda)$ is a simple module of dimension $l$.
\end{theo}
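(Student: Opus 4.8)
The plan is to mirror the proof of Theorem~\ref{t1}, interchanging the roles of $x$ and $y$. Since $\{v_k \mid 0 \le k \le l-1\}$ is a $\mathbb{k}$-basis of $\mathcal{V}_2(\mu,\lambda)$ by construction, the dimension is $l$, so only simplicity needs to be established.

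First I would pick a nonzero submodule $W$ and a nonzero element $w = \sum_{i=0}^{l-1}\xi_i v_i \in W$ of minimal length (i.e.\ with the fewest nonzero coefficients $\xi_i$). The commuting normal elements $z$ and $\theta$ act diagonally on the basis, with $v_k z = p^{-k}\lambda v_k$ and $v_k\theta = \lambda q^k v_k$, so both $wz$ and $w\theta$ lie in $W$. Assuming $w$ has two nonzero coefficients $\xi_k$ and $\xi_s$ with $k \ne s$, I would separate these indices by cases exactly as in Theorem~\ref{t1}: if $k \not\equiv s \pmod m$, then $p^{-k} \ne p^{-s}$, so $wz - p^{-k}\lambda w \in W$ is nonzero (its $v_s$-coefficient survives) and has strictly smaller length; if $k \equiv s \pmod m$, then $l = \lcmu(m,n)$ cannot divide the nonzero integer $k-s$, hence $n \nmid (k-s)$, so $q^k \ne q^s$ and $w\theta - \lambda q^k w \in W$ is nonzero of strictly smaller length. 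Either outcome contradicts minimality, so $w$ must be a scalar multiple of a single basis vector, giving $v_i \in W$ for some $i$.

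The only substantive departure from Theorem~\ref{t1} is the final step: here the operator $x$ no longer permutes the basis (indeed $v_0 x = 0$), but the operator $y$ does, since $v_k y = \mu v_{k+1}$ for $0 \le k \le l-2$ and $v_{l-1} y = \mu v_0$ with $\mu \in \mathbb{k}^*$. Hence repeatedly applying $y$ to $v_i \in W$ yields all of $v_0, v_1, \dots, v_{l-1}$, forcing $W = \mathcal{V}_2(\mu,\lambda)$ and proving simplicity.

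I do not anticipate a real obstacle. The one point requiring care is the two-case reduction of $w$, which works precisely because the eigenvalues of $z$ are governed by the order $m$ of $p$ while those of $\theta$ are governed by the order $n$ of $q$, and $l = \lcmu(m,n)$; and one must remember to use $y$ rather than $x$ to sweep out the whole basis from a single $v_i$.
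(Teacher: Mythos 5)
Your proposal is correct and follows essentially the same route as the paper, which simply defers to the argument of Theorem \ref{t1}: use the diagonal actions of the normal elements $z$ and $\theta$ (with eigenvalue patterns governed by $\ord(p)=m$ and $\ord(q)=n$, respectively, and $l=\lcmu(m,n)$) to shrink an element of a nonzero submodule to a single basis vector, and then sweep out the basis. Your explicit observation that one must use $y$ instead of $x$ for the final cyclic-permutation step is exactly the needed adaptation.
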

\begin{proof}
    The proof follows a similar line of reasoning as presented in Theorem \ref{t1}.
\end{proof}
\subsection{Simple Modules of Type \texorpdfstring{$\mathcal{V}_3(\lambda)$}{TEXT}} Given $\lambda\in \mathbb{k}^*$, let $\mathcal{V}_3(\lambda)$ denote the $\mathbb{k}$-vector space with basis $\{v_k~|~0\leq k\leq \ord(pq)-1\}$. Let us define the $\mathcal{H}_{p,q}$-module on $\mathcal{V}_3(\lambda)$ with action given by
\begin{align}
v_kz&=p^{-k}\lambda v_k,\ \ v_ky=\begin{cases}
    v_{k+1},& 0\leq k\leq \ord(pq)-2\\
    0,&k=\ord(pq)-1
\end{cases} \label{5}\\
v_kx&=\begin{cases}
    \lambda[k]_{p,q} v_{k-1},& 1\leq k\leq \ord(pq)-1\\
    0,&k=0
\end{cases}\label{6}
\end{align}
This action indeed defines a $\mathcal{H}_{p,q}$-module structure on $\mathcal{V}_3(\lambda)$. Moreover the action of the normal element $\theta=yx-p^{-1}xy$ on $\mathcal{V}_3(\lambda)$ is given by $v_k\theta=q^k\lambda v_k$ for all $0\leq k\leq \ord(pq)-1$. 
\begin{theo}
    The $\mathcal{H}_{p,q}$-module $\mathcal{V}_3(\lambda)$ is a simple module of dimension $\ord(pq)$.
\end{theo}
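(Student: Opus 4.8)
The plan is to mirror the proof of Theorem~\ref{t1}: use the diagonalisable actions of the two normal elements $z$ and $\theta$ to strip an arbitrary element of a submodule down to a single basis vector, and then use the actions of $x$ and $y$ to sweep across the whole basis. Write $N:=\ord(pq)$. Since $\{v_0,\dots,v_{N-1}\}$ is by definition a $\mathbb{k}$-basis, the dimension statement is immediate, and one checks, exactly as was done for $\mathcal{V}_1(\mu,\lambda,\gamma)$, that the displayed formulas are compatible with the defining relations $zx=p^{-1}xz$, $zy=pyz$, $yx-qxy=z$; I would merely remark on this rather than repeat the computation. The real content is simplicity.

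First I would isolate two elementary facts. \emph{(a)} Since $pq\neq 1$ we have $q\neq p^{-1}$, so by the definition of $[k]_{p,q}$ it vanishes precisely when $q^k=p^{-k}$, i.e. precisely when $(pq)^k=1$, i.e. precisely when $N\mid k$; in particular $[k]_{p,q}\neq 0$ for every $1\le k\le N-1$. \emph{(b)} As $pq$ lies in the cyclic group $\Gamma=\langle p,q\rangle$ of order $l=\lcmu(m,n)$, we have $N\mid l$, hence $N\le l$; therefore, for distinct indices $0\le k,s\le N-1$ one has $|k-s|<l$, so $k-s$ cannot be divisible by both $m$ and $n$. Consequently, for such $k$ and $s$ either $p^{-k}\neq p^{-s}$ (if $m\nmid k-s$) or $q^{k}\neq q^{s}$ (otherwise, so $n\nmid k-s$): the pair $(z,\theta)$ jointly separates all basis vectors.

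Now let $W\neq 0$ be a submodule and pick $0\neq w=\sum_{i=0}^{N-1}\xi_i v_i\in W$ with the fewest nonzero coefficients. Suppose $w$ had two nonzero coefficients $\xi_k,\xi_s$. Using $v_iz=p^{-i}\lambda v_i$ and $v_i\theta=q^{i}\lambda v_i$, both $wz-p^{-k}\lambda w=\sum_i(p^{-i}-p^{-k})\lambda\xi_i v_i$ and $w\theta-q^{k}\lambda w=\sum_i(q^{i}-q^{k})\lambda\xi_i v_i$ lie in $W$ and have strictly fewer nonzero coefficients than $w$ (the $v_k$-coefficient dies). By fact~\emph{(b)}, at least one of these two vectors is nonzero, since $\lambda\xi_s\neq 0$ and not both $p^{-s}=p^{-k}$ and $q^{s}=q^{k}$ can hold; this contradicts minimality. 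Hence $w=\xi_k v_k$ with $\xi_k\neq 0$, so $v_k\in W$ for some $k$. Applying $y$ repeatedly yields $v_k,v_{k+1},\dots,v_{N-1}\in W$, and applying $x$ repeatedly, where $v_jx=\lambda[j]_{p,q}v_{j-1}$ is a \emph{nonzero} scalar multiple of $v_{j-1}$ for $1\le j\le N-1$ by fact~\emph{(a)} and $\lambda\neq 0$, yields $v_{k-1},\dots,v_0\in W$. Thus $W$ contains the whole basis, so $W=\mathcal{V}_3(\lambda)$, proving simplicity.

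I do not expect a genuine obstacle here; the only points needing care are the bookkeeping at the truncation indices $0$ and $N-1$ (where the actions of $x$ and $y$ drop to zero) and the observation that $z$ and $\theta$ together separate \emph{all} basis vectors, which rests exactly on the divisibility $\ord(pq)\mid\lcmu(m,n)$.
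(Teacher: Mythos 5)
Your proof is correct and follows essentially the same approach as the paper, which simply cites the argument of Theorem~\ref{t1} ("a parallel line of reasoning"): use the joint eigenvalues of the normal elements $z$ and $\theta$, separated because $\ord(pq)\mid l$, to reduce to a single basis vector, then sweep through the basis. Your explicit handling of the truncated actions --- going up with $y$ and down with $x$, using that $[k]_{p,q}\neq 0$ for $1\le k\le \ord(pq)-1$ --- is exactly the adaptation the paper leaves implicit.
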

\begin{proof}
    The proof follows a parallel line of reasoning as in Theorem \ref{t1}.
\end{proof}
In Section \ref{maximum}, we intend to identify the conditions on $m$ and $n$ for which the $\ord(pq)$ is either at its maximum or non-maximum values in the cyclic group $\Gamma$.
\section{Classification of Simple Modules over \texorpdfstring{$\mathcal{H}_{p,q}$}{TEXT}}\label{sec6}
In this section, we completely classify all simple $\mathcal{H}_{p,q}$-modules, assuming $p$ and $q$ are primitive $m$-th and $n$-th roots of unity respectively and $\mathbb{k}$ is an algebraically closed field. Let $\mathcal{N}$ be a simple module over $\mathcal{H}_{p,q}$. Then by Proposition \ref{main}, the $\mathbb{K}$-dimension of $\mathcal{N}$ is finite and bounded above by $\pideg(\mathcal{H}_{p,q})$. This classification of simple modules is based on the action of appropriate central or normal elements within the algebra $\mathcal{H}_{p,q}$. It is important to note that the elements $z$ and $\theta$ 
are normal elements. In view of Lemma \ref{itn}, the action of each normal element (namely, $z$ and $\theta$) on a simple module $\mathcal{N}$ is either trivial or invertible. Based on this fact, we will now consider the following subsections:
\subsection{Simple \texorpdfstring{$z$}{TEXT}-torsion \texorpdfstring{$\mathcal{H}_{p,q}$}{TEXT}-modules}\label{ztorsion} In this case, the action of $z$ on $ \mathcal{N}$ is trivial. Then $\mathcal{N}$ becomes a simple module over the factor algebra $\mathcal{H}_{p,q}/\langle{z}\rangle$ which is isomorphic to a quantum plane 
\[\mathcal{O}_{{q}}(\mathbb{k}^2)=\langle y,x~|~yx=qxy\rangle.\]
It is important to note that the quantum plane $\mathcal{O}_{{q}}(\mathbb{k}^2)$ serves as a prime affine PI algebra and its PI degree is given by $n$ (cf. \cite{di}). The simple modules over $\mathcal{O}_{{q}}(\mathbb{k}^2)$ have already been classified in \cite{moh}. Here the possible $\mathbb{K}$-dimensions of $\mathcal{N}$ are $1$ (if $\mathcal{N}$ is $x$-torsion or $y$-torsion) or $n$ (if $\mathcal{N}$ is $x,y$-torsionfree).
\subsection{Simple \texorpdfstring{$\theta$}{TEXT}-torsion \texorpdfstring{$\mathcal{H}_{p,q}$}{TEXT}-modules}\label{thetatorsion} In this case, the action of $\theta$ on $ \mathcal{N}$ is trivial. Then $\mathcal{N}$ becomes a simple module over the factor algebra $\mathcal{H}_{p,q}/\langle{\theta}\rangle$ which is isomorphic to a quantum plane \[\mathcal{O}_{{p}}(\mathbb{k}^2)=\langle x,y~|~xy=pyx\rangle.\]
In the root of unity context, the quantum plane $\mathcal{O}_{{p}}(\mathbb{k}^2)$ represents a prime affine PI algebra, and its PI degree is $m$ (cf. \cite{di}). The simple modules over $\mathcal{O}_{{p}}(\mathbb{k}^2)$ have already been classified in \cite{moh}. Here the possible $\mathbb{K}$-dimensions of $\mathcal{N}$ are $1$ (if $\mathcal{N}$ is $x$-torsion or $y$-torsion) or $m$ (if $\mathcal{N}$ is $x,y$-torsionfree).
\subsection{Simple \texorpdfstring{$z,\theta$}{TEXT}-torsionfree \texorpdfstring{$\mathcal{H}_{p,q}$}{TEXT}-modules}\label{torfree} In that case, the action of both $z$ and $\theta$ on $\mathcal{N}$ are invertible. Let $l$ denote the least common multiple of $m$ and $n$. It is important to note that the elements $ x^l,y^l,z$, and $\theta$ commute with each other in $\mathcal{H}_{p,q}$. Since $\mathcal{N}$ is finite-dimensional, there is a common eigenvector $v$ in $\mathcal{N}$ of these commuting operators. So we can take
\[vx^l = \alpha v,~~ v y^{l} = \beta v,~~vz=\lambda_1v,\ v\theta=\lambda_2v\]
for some $\alpha,\beta \in \mathbb{k}$ and $\lambda_1,\lambda_2\in\mathbb{k}^*$. Note that the $ x^l$ and $y^l$ are central elements of $\mathcal{H}_{p,q}$, so by Schur's lemma, they act as multiplication by scalar on $\mathcal{N}$. 
In the following we shall determine the structure of simple $ \mathcal{H}_{p,q}$-module $ \mathcal{N}$ according to the scalar parameters $\alpha$ and $\beta$: \\
\textsc{ Case I:} Let us first assume that $\alpha \neq 0$. Then $x$ acts as an invertible operator on $\mathcal{N}$. Therefore the vectors $vx^k$ where $ 0\leq k \leq l-1 $ of $\mathcal{N}$ are nonzero. Let us set $(\mu,\lambda,\gamma):=(\alpha^{\frac{1}{l}},\lambda_1,\lambda_2)\in(\mathbb{k}^*)^3$. Define a $\mathbb{k}$-linear map 
$\Phi:\mathcal{V}_1(\mu,\lambda,\gamma)\rightarrow \mathcal{N}$ by $\Phi(v_k)=\mu^{-k}vx^k$ for all $0\leq k\leq l-1$. We can easily verify that $\Phi$ is a nonzero $\mathcal{H}_{p,q}$-module homomorphism. Thus by Schur's lemma, $\Phi$ is an isomorphism.\\
\noindent\textsc{Case II:} Assume that $\alpha =0$ and $\beta\neq 0$. Then the operators $x$ and $y$ on $\mathcal{N}$ are nilpotent and invertible respectively. Now consider the $\mathbb{k}$-space $\ker(x):=\{w\in\mathcal{N}~|~wx=0\}$. Clearly $\ker(x)\neq \varphi$. From the defining relations, we can verify that each of the operators $y^l$ and $z$ keeps the $\mathbb{k}$-space $\ker(x)$ invariant. Therefore there is a common eigenvector $w\in \ker(x)$ of the commuting operators $y^l$ and $z$. Let us take
\[wx=0,\ wy^l=\beta'w,\ wz=\lambda'w.\] By our hypothesis, we have $\beta'$ and $\lambda'$ both are nonzero. Set $(\mu,\lambda):=((\beta')^{\frac{1}{l}},\lambda')\in (\mathbb{k}^*)^2$. Now define a $\mathbb{k}$-linear map $\Phi:\mathcal{V}_2(\mu,\lambda)\rightarrow \mathcal{N}$ by $\Phi(v_k)=\mu^{-k}wy^k$ for all $0\leq k\leq l-1$. It is easy to check that $\Phi$ is a nonzero $\mathcal{H}_{p,q}$-module homomorphism. Thus by Schur's lemma, $\Phi$ is a module isomorphism.\\
\noindent\textsc{Case III:} Assume that $ \alpha=\beta=0$. Then both $x$ and $y$ are nilpotent operators on $\mathcal{N}$. Similar to Case II, there is a common eigenvector $w\in \ker(x)$ of the commuting operators $y^l$ and $z$. As $y$ is a nilpotent operator, take
\[wx=0,\ wy^l=0,\ wz=\lambda w,\ \ \lambda \neq 0.\]
Let $r$ be the smallest index such that $wy^{r-1} \neq 0 $ and $ wy^{r}=0$, where $1\leq r\leq l$. We now claim that $r=\ord(pq)$. Indeed, by simplifying the equality $wy^rx=0$, we have 
\[0=wy^rx=q^rwxy^r+[r]_{p,q}wzy^{r-1}=[r]_{p,q}\lambda wy^{r-1}.\]
This implies $[r]_{p,q}=0$. As $r$ is the smallest nonzero index, we can obtain $r=\ord(pq)$.
\par Thus the vectors $wy^k$ for $0\leq k\leq \ord(pq)-1$ are nonzero. Now define a $\mathbb{k}$-linear map $\Phi:\mathcal{V}_3(\lambda)\rightarrow \mathcal{N}$ by setting $\Phi(v_k)=wy^k$ for all $0\leq k\leq \ord(pq)-1$. This is a nonzero $\mathcal{H}_{p,q}$-module homomorphism. Thus by Schur's lemma, $\Phi$ is a module isomorphism.
\par Finally, the above discussion provides an opportunity for the classification of simple $\mathcal{H}_{p,q}$-modules in terms of scalar parameters. 
\begin{theo}\label{classi}
    Suppose $\mathcal{N}$ be a simple $z,\theta$-torsionfree $\mathcal{H}_{p,q}$-module. Then 
    \begin{itemize}
        \item[(1)] $\mathcal{N}$ is isomorphic to $\mathcal{V}_1(\mu,\lambda,\gamma)$ if $\mathcal{N}$ is $x$-torsionfree.
        \item [(2)] $\mathcal{N}$ is isomorphic to $\mathcal{V}_2(\mu,\lambda)$ if $\mathcal{N}$ is $x$-torsion and $y$-torsionfree.   
        \item[(3)] $\mathcal{N}$ is isomorphic to $\mathcal{V}_3(\lambda)$ if $\mathcal{N}$ is $x,y$-torsion.
    \end{itemize} 
\end{theo}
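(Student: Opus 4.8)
The plan is to reduce the classification to the three cases already analyzed in Subsection~\ref{torfree} (Cases I, II, III) by showing that a simple $z,\theta$-torsionfree module $\mathcal{N}$ always falls into exactly one of the three dichotomies listed, and that these dichotomies match the torsion behaviour of $x$ and $y$. The key point is that, since $x^l$ and $y^l$ are central (Corollary~\ref{cp1}) and $\mathbb{k}$ is algebraically closed, Schur's lemma forces $x^l$ and $y^l$ to act as scalars $\alpha,\beta\in\mathbb{k}$ on $\mathcal{N}$. So $x$ acts invertibly on $\mathcal{N}$ if and only if $\alpha\neq 0$, and likewise $y$ acts invertibly if and only if $\beta\neq 0$: indeed if $\alpha\neq 0$ then $x$ has the two-sided inverse $\alpha^{-1}x^{l-1}$, while if $\alpha=0$ then $x^l=0$ on a finite-dimensional space, so $x$ is nilpotent, hence not injective on the nonzero module $\mathcal{N}$, and being $x$-torsion for a simple module means $\mathcal{N}x=0$ or $x$ invertible by the analogue of Lemma~\ref{itn} — but $x$ is not normal, so I should instead argue directly: a simple module is $x$-torsionfree iff the (well-defined, since $\{x^i\}$ generates an Ore set here because $x$ is a non-zero-divisor in the domain... ) — more carefully, $\mathcal{N}$ is $x$-torsionfree iff $x$ acts injectively, and on a finite-dimensional space injective equals bijective, so $\mathcal{N}$ is $x$-torsionfree iff $\alpha\neq 0$, and $\mathcal{N}$ is $x$-torsion (i.e.\ every vector killed by a power of $x$) iff $\alpha=0$. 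The same applies to $y$ and $\beta$.

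With that established, the three cases partition as follows: $\mathcal{N}$ is $x$-torsionfree $\iff \alpha\neq 0 \iff$ Case~I; $\mathcal{N}$ is $x$-torsion and $y$-torsionfree $\iff \alpha=0,\beta\neq 0 \iff$ Case~II; $\mathcal{N}$ is $x$-torsion and $y$-torsion $\iff \alpha=\beta=0 \iff$ Case~III. The remaining fourth a priori possibility, $\alpha\neq 0$ and $\beta=0$, cannot occur: if $\alpha\neq 0$ then $x$ is invertible on $\mathcal{N}$, and then from Lemma~\ref{identity}(1) with $k=l$, namely $yx^l=q^l x^l y+[l]_{p,q}x^{l-1}z$, together with $q^l=1$ and $[l]_{p,q}=0$ (both orders divide $l$), we get $yx^l=x^l y$, which is automatic; the real obstruction is that $x$ invertible forces $y$ invertible too — one sees this because $\theta=(q-p^{-1})xy+z$ acts invertibly (as $\mathcal{N}$ is $\theta$-torsionfree) and $z$ acts invertibly, so if in addition $p^{-1}q^{-1}\neq 1$ one uses the second expression $\theta=(1-p^{-1}q^{-1})yx+p^{-1}q^{-1}z$ to solve for $yx$ as an invertible operator, whence $y$ is invertible; if $p^{-1}q^{-1}=1$ then $pq=1$, excluded by hypothesis. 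Hence $\alpha\neq 0\Rightarrow\beta\neq 0$, ruling out the fourth case and also confirming that in Case~I the module is automatically $y$-torsionfree. So the four logical possibilities collapse to exactly the three in the statement.

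The main work — the constructions of the isomorphisms $\Phi$ onto $\mathcal{V}_1$, $\mathcal{V}_2$, $\mathcal{V}_3$ — has already been carried out in Cases~I, II, III of Subsection~\ref{torfree}: in each case one picks a common eigenvector (for the commuting family $x^l,y^l,z,\theta$ in Case~I, and for $y^l,z$ on $\ker(x)$ in Cases~II and III), normalizes the structure scalars to land in $(\mathbb{k}^*)^3$, $(\mathbb{k}^*)^2$, or $\mathbb{k}^*$ respectively, and checks that the explicit linear map intertwines the generator actions; Schur's lemma then upgrades the nonzero homomorphism to an isomorphism since both source and target are simple. The only point needing care is that these eigenvalues genuinely lie in $\mathbb{k}^*$: $\lambda_1,\lambda_2$ (and $\lambda',\beta'$) are nonzero precisely because $z$ and $\theta$ act invertibly by torsionfreeness, and $\alpha$ (resp.\ $\beta'$) is nonzero by the case hypothesis, so its $l$-th root $\mu$ can be extracted in the algebraically closed field $\mathbb{k}$.

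I expect the main obstacle to be the bookkeeping argument ruling out the case $\alpha\neq0,\beta=0$ and, dually, confirming consistency of the torsion conditions with the case division — i.e.\ making precise the equivalence ``$\mathcal{N}$ is $x$-torsionfree $\iff$ $x$ acts bijectively $\iff$ $\alpha\neq 0$'' for the \emph{non-normal} element $x$, where Lemma~\ref{itn} does not directly apply and one must instead invoke finite-dimensionality of $\mathcal{N}$ together with simplicity to see that $x$ acting non-injectively forces every vector to be $x$-torsion (the kernel filtration $\ker(x)\subseteq\ker(x^2)\subseteq\cdots$ stabilizes, its union is a nonzero submodule hence all of $\mathcal{N}$). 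Once that equivalence is in place and the fourth case is excluded via the $\theta$-relations as above, the theorem follows immediately by assembling Cases~I--III.
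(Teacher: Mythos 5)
Your overall route is the same as the paper's: use centrality of $x^l,y^l$ (Corollary \ref{cp1}) and Schur's lemma to get scalars $\alpha,\beta$, pick a common eigenvector of the commuting operators $x^l,y^l,z,\theta$ (resp.\ of $y^l,z$ on $\ker(x)$), split into the three cases according to $\alpha\neq 0$; $\alpha=0,\beta\neq0$; $\alpha=\beta=0$, and build the explicit intertwiners onto $\mathcal{V}_1,\mathcal{V}_2,\mathcal{V}_3$, upgraded to isomorphisms by Schur. Your careful justification that, on a finite-dimensional simple module, ``$x$-torsionfree'' is equivalent to ``$x^l$ acts by a nonzero scalar'' (despite $x$ not being normal) is a legitimate and worthwhile addition.

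However, your argument excluding the ``fourth case'' $\alpha\neq 0,\ \beta=0$ is wrong, both in its reasoning and in its conclusion. You claim that since $\theta$ and $z$ act invertibly, the identity $\theta=(1-p^{-1}q^{-1})yx+p^{-1}q^{-1}z$ lets you ``solve for $yx$ as an invertible operator''; but $yx=(1-p^{-1}q^{-1})^{-1}(\theta-p^{-1}q^{-1}z)$ is a difference of commuting invertible operators, which need not be invertible. Concretely, take $\gamma=\lambda$ in $\mathcal{V}_1(\mu,\lambda,\gamma)$: this module is simple, $z,\theta$-torsionfree and $x$ acts bijectively, yet $v_1y=\mu^{-1}q^{-1}\frac{pq\lambda-pq\lambda}{pq-1}v_0=0$, so $y^l$ acts as $0$ (i.e.\ $\alpha\neq0$ while $\beta=0$), and indeed $v_1(\theta-p^{-1}q^{-1}z)=(q^{-1}\lambda-q^{-1}\lambda)v_1=0$. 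So the implication ``$x$ invertible $\Rightarrow$ $y$ invertible'' is false, and in Case I the module need not be $y$-torsionfree. Fortunately this does not damage the theorem or the rest of your proof: item (1) of the statement covers all $x$-torsionfree modules with no claim about $y$, so the three listed cases already exhaust all possibilities and nothing needs to be ruled out --- this is exactly how the paper proceeds (its Case I is simply $\alpha\neq0$, with $\beta$ arbitrary). Delete the exclusion argument; also note that in the $\alpha=\beta=0$ case the deferred construction still requires the verification that the length of the $y$-string equals $\ord(pq)$ via $[r]_{p,q}=0$, which you only implicitly invoke.
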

Based on the classification, we can derive the following:
\begin{theo}\label{nesu}
Suppose $\ord(p)=m$ and $\ord(q)=n$ with $m\notdivides n$ and $n \notdivides m$ satisfying $\ord(pq)=\lcmu(m,n)$. The existence of such a permissible pair $(m,n)$ is stated in Proposition \ref{suff}. Then the following are equivalent
\begin{itemize}
    \item [(1)] $\mathcal{N}$ is maximal-dimensional simple $\mathcal{H}_{p,q}$-module, i.e., $\dime_{\mathbb{k}}(\mathcal{N})=\pideg \mathcal{H}_{p,q}$.
    \item [(2)] $\mathcal{N}$ is $z,\theta$-torsionfree simple $\mathcal{H}_{p,q}$-module.
\end{itemize}
\end{theo}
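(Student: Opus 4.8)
The plan is to establish the equivalence by proving each direction separately, relying on the complete classification of simple modules assembled in the previous sections. The key numerical input is that $\pideg(\mathcal{H}_{p,q}) = l = \lcmu(m,n)$, together with the dimension counts: a $z$-torsion simple module has dimension $1$ or $n$ (Subsection \ref{ztorsion}), a $\theta$-torsion simple module has dimension $1$ or $m$ (Subsection \ref{thetatorsion}), and a $z,\theta$-torsionfree simple module has dimension $l$, $l$, or $\ord(pq)$ according to whether it is of type $\mathcal{V}_1$, $\mathcal{V}_2$, or $\mathcal{V}_3$ (Theorem \ref{classi} together with Theorems \ref{t1} and the subsequent ones).

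For the direction $(2)\Rightarrow(1)$: suppose $\mathcal{N}$ is $z,\theta$-torsionfree. By Theorem \ref{classi}, $\mathcal{N}$ is isomorphic to $\mathcal{V}_1(\mu,\lambda,\gamma)$, $\mathcal{V}_2(\mu,\lambda)$, or $\mathcal{V}_3(\lambda)$. In the first two cases $\dime_{\mathbb{k}}(\mathcal{N}) = l = \pideg(\mathcal{H}_{p,q})$ and we are done immediately. In the third case $\dime_{\mathbb{k}}(\mathcal{N}) = \ord(pq)$, and here we invoke the standing hypothesis of the theorem that $\ord(pq) = \lcmu(m,n) = l$, so again $\dime_{\mathbb{k}}(\mathcal{N}) = \pideg(\mathcal{H}_{p,q})$. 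Hence $\mathcal{N}$ is maximal-dimensional.

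For the direction $(1)\Rightarrow(2)$, I argue by contraposition: if $\mathcal{N}$ is not $z,\theta$-torsionfree, then (since the action of each normal element $z$, $\theta$ on a simple module is trivial or invertible by Lemma \ref{itn}) $\mathcal{N}$ is either $z$-torsion or $\theta$-torsion. If $\mathcal{N}$ is $z$-torsion then $\dime_{\mathbb{k}}(\mathcal{N}) \in \{1, n\}$; if $\mathcal{N}$ is $\theta$-torsion then $\dime_{\mathbb{k}}(\mathcal{N}) \in \{1, m\}$. In all cases $\dime_{\mathbb{k}}(\mathcal{N}) \leq \max(m,n)$. Since the hypothesis $m \notdivides n$ and $n \notdivides m$ forces $\max(m,n) < \lcmu(m,n) = l = \pideg(\mathcal{H}_{p,q})$ (a proper divisor of $l$ that is not the full $l$ is strictly smaller, and neither $m$ nor $n$ equals $l$ under the non-divisibility hypothesis), we conclude $\dime_{\mathbb{k}}(\mathcal{N}) < \pideg(\mathcal{H}_{p,q})$, so $\mathcal{N}$ is not maximal-dimensional. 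This proves $(1)\Rightarrow(2)$.

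The only genuinely delicate point is the strict inequality $\max(m,n) < \lcmu(m,n)$ under the non-divisibility hypothesis, which must be checked carefully: if $m \nmid n$ and $n \nmid m$, then neither $m$ nor $n$ can equal $\lcmu(m,n)$ (equality $m = \lcmu(m,n)$ would force $n \mid m$), so both are proper divisors of $l$ and hence at most $l/2 < l$; combined with the torsion dimension bounds this closes the argument. I expect the main obstacle to be purely bookkeeping — making sure the hypothesis of Theorem \ref{nesu} ($\ord(pq) = \lcmu(m,n)$) is used in exactly the right place (the $\mathcal{V}_3$ case of the forward direction) and that the dimension bounds invoked for the torsion cases are correctly imported from Subsections \ref{ztorsion} and \ref{thetatorsion}.
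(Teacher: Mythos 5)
Your proof is correct and follows exactly the route the paper intends: Theorem \ref{nesu} is stated as a direct consequence of the classification, with $(2)\Rightarrow(1)$ coming from Theorem \ref{classi} and the hypothesis $\ord(pq)=\lcmu(m,n)$ handling the $\mathcal{V}_3(\lambda)$ case, and $(1)\Rightarrow(2)$ from the dimension bounds $1$, $m$, $n$ in the $z$- or $\theta$-torsion cases of Subsections \ref{ztorsion} and \ref{thetatorsion}, which are strictly below $l=\pideg(\mathcal{H}_{p,q})$ precisely because $m\notdivides n$ and $n\notdivides m$. Your careful justification of $\max(m,n)<\lcmu(m,n)$ is the right bookkeeping and matches the role of the non-divisibility hypothesis highlighted by the corollary following the theorem.
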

In general, the non-divisibility hypothesis stated in Theorem \ref{nesu} should not be omitted, as mentioned below:
\begin{coro} Suppose $\ord(p)=m$ and $\ord(q)=n$. Then
\begin{itemize}
    \item [(1)] If $m\divides n$, then there is a $z$-torsion and $x,y$-torsionfree simple $\mathcal{H}_{p,q}$-modules of maximal-dimensional. (sec Subsection \ref{ztorsion}).
    \item [(2)] If $n\divides m$, then there is a $\theta$-torsion and $x,y$-torsionfree simple $\mathcal{H}_{p,q}$-modules of maximal-dimensional. (sec Subsection \ref{thetatorsion}).
    \end{itemize}
    \end{coro}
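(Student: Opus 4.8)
The plan is to produce the required modules by pulling back suitable simple modules of a quantum plane along the quotient maps recorded in Subsections~\ref{ztorsion} and \ref{thetatorsion}, and then to observe that under the stated divisibility hypotheses the dimension of such a pullback already matches $\pideg\mathcal{H}_{p,q}=\lcmu(m,n)$.

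For part~(1), suppose $m\divides n$, so $\lcmu(m,n)=n$. Recall the isomorphism $\mathcal{H}_{p,q}/\langle z\rangle\cong\mathcal{O}_q(\mathbb{k}^2)=\langle y,x\mid yx=qxy\rangle$, a prime affine PI algebra of PI degree $n$. I would take the $n$-dimensional $\mathcal{O}_q(\mathbb{k}^2)$-module with basis $\{w_0,\dots,w_{n-1}\}$ on which $x$ acts diagonally, $w_kx=q^kw_k$, and $y$ acts as the cyclic shift $w_ky=w_{k+1}$ with indices read modulo $n$; the relation $yx=qxy$ holds identically and $q^n=1$ makes the wrap-around consistent. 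This module is simple because $x$ has $n$ distinct eigenvalues while $y$ permutes the corresponding eigenlines in a single cycle (alternatively one may just quote the classification of simple $\mathcal{O}_q(\mathbb{k}^2)$-modules from \cite{moh}). Viewing it as an $\mathcal{H}_{p,q}$-module through the quotient map, $z$ acts as $0$, so the module is $z$-torsion, whereas $x$ and $y$ act invertibly, so it is $x,y$-torsionfree; its dimension is $n=\lcmu(m,n)=\pideg\mathcal{H}_{p,q}$, so it is maximal-dimensional.

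For part~(2), suppose $n\divides m$, so $\lcmu(m,n)=m$. Now use $\mathcal{H}_{p,q}/\langle\theta\rangle\cong\mathcal{O}_p(\mathbb{k}^2)=\langle x,y\mid xy=pyx\rangle$, of PI degree $m$. Symmetrically, I would take the $m$-dimensional $\mathcal{O}_p(\mathbb{k}^2)$-module with basis $\{w_0,\dots,w_{m-1}\}$ on which $y$ acts diagonally, $w_ky=p^kw_k$, and $x$ acts as the cyclic shift $w_kx=w_{k+1}$ modulo $m$; again $xy=pyx$ holds and $p^m=1$ makes the wrap-around consistent, and simplicity follows as before from $y$ having $m$ distinct eigenvalues permuted cyclically by $x$. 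Pulled back to $\mathcal{H}_{p,q}$, the element $\theta$ acts as $0$, so the module is $\theta$-torsion, while $x$ and $y$ act invertibly, so it is $x,y$-torsionfree, and its dimension is $m=\lcmu(m,n)=\pideg\mathcal{H}_{p,q}$.

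I do not expect a serious obstacle. The only two points that need a word of justification are that torsionfreeness passes from the quotient up to $\mathcal{H}_{p,q}$ --- immediate, since on these modules $x$ and $y$ act by invertible operators, so no power of either can annihilate a nonzero vector --- and the numerology that the dimension $n$ (respectively $m$) equals $\pideg\mathcal{H}_{p,q}$ \emph{precisely} because $m\divides n$ (respectively $n\divides m$). This also explains why the non-divisibility hypothesis in Theorem~\ref{nesu} is genuinely needed: in the divisible cases the maximal-dimensional simple modules just constructed are $z$-torsion or $\theta$-torsion, not $z,\theta$-torsionfree.
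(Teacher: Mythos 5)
Your proposal is correct and follows essentially the same route as the paper: the corollary is obtained by passing to the quotients $\mathcal{H}_{p,q}/\langle z\rangle\cong\mathcal{O}_q(\mathbb{k}^2)$ and $\mathcal{H}_{p,q}/\langle\theta\rangle\cong\mathcal{O}_p(\mathbb{k}^2)$ and using the existence of $x,y$-torsionfree simple quantum-plane modules of dimension $n$ (resp.\ $m$), which equals $\pideg(\mathcal{H}_{p,q})=\lcmu(m,n)$ under the divisibility hypothesis. Your explicit weight-vector/cyclic-shift construction is just a concrete instance of the classification from \cite{moh} that the paper cites, so there is no substantive difference.
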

\section{Isomorphisms between simple \texorpdfstring{$\mathcal{H}_{p,q}$}{TEXT}-modules}\label{sec7}
In this section, we study when two of the modules appearing in the classification given in Theorem \ref{classi} are isomorphic. 
Consequent to the classification, it can be deduced that a module belonging to any of the types described in that theorem cannot exhibit isomorphism with a module from any other type. However, it is possible for two distinct ones of the same type to be isomorphic to each other. Suppose $p,q\in \mathbb{k}^*$ with $pq\neq 1$ is such that $\ord(p)=m,\ord(q)=n$ and $l=\lcmu(m,n)$. 
\begin{prop}\label{1iso}
  Let $(\mu,\lambda,\gamma)$ and $(\mu',\lambda',\gamma')$ belong to $(\mathbb{k}^*)^3$. Then $\mathcal{V}_1(\mu,\lambda,\gamma)\cong \mathcal{V}_1(\mu',\lambda',\gamma')$ if and only if $\mu^l=(\mu')^l$ and there exists $0\leq k\leq l-1$ such that $\lambda=p^k\lambda'$ and $\gamma=q^{-k}\gamma'$.
\end{prop}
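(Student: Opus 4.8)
The plan is to prove the two implications separately, using the fact that $z$ and $\theta$ act diagonalizably on both modules with explicitly known eigenvalues. First I would record, from the module definitions and the computed action of $\theta$, that on $\mathcal{V}_1(\mu,\lambda,\gamma)$ we have the eigenvalue data $v_k z = p^k\lambda v_k$ and $v_k\theta = q^{-k}\gamma v_k$, so the set of eigenvalues of $z$ is $\{p^k\lambda : 0\le k\le l-1\}$ and similarly for $\theta$; moreover $x^l$ acts as the scalar $\mu^l$ (since $v_k x^l = \mu^l v_k$, as $x$ cyclically permutes the basis up to the factor $\mu$). For the \emph{only if} direction, suppose $\Phi\colon\mathcal{V}_1(\mu,\lambda,\gamma)\to\mathcal{V}_1(\mu',\lambda',\gamma')$ is an isomorphism. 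Since $x^l$ is central it acts as a scalar on each side (Schur), and $\Phi$ intertwines these scalars, forcing $\mu^l=(\mu')^l$. Next, $\Phi$ carries a $z$-eigenvector with eigenvalue $\lambda$ (namely $v_0$) to a $z$-eigenvector of the target with the same eigenvalue; the eigenvalues of $z$ on the target are exactly $\{p^k\lambda' : 0\le k\le l-1\}$, so $\lambda = p^k\lambda'$ for some $0\le k\le l-1$. The subtlety is then to pin down $\gamma$ relative to that \emph{same} $k$: I would argue that on $\mathcal{V}_1(\mu',\lambda',\gamma')$ the joint eigenvalue of the commuting pair $(z,\theta)$ on the basis vector $v'_k$ is $(p^k\lambda', q^{-k}\gamma')$, and these joint eigenvalues are pairwise distinct as $k$ ranges over $0,\dots,l-1$ — here one uses $l=\lcmu(m,n)$, exactly as in the proof of Theorem~\ref{t1}: if $(p^j\lambda',q^{-j}\gamma')=(p^k\lambda',q^{-k}\gamma')$ then $m\mid(j-k)$ and $n\mid(j-k)$, hence $l\mid(j-k)$. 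Since $\Phi(v_0)$ is a simultaneous $(z,\theta)$-eigenvector with joint eigenvalue $(\lambda,\gamma)=(p^k\lambda',\,?\,)$, it must be a scalar multiple of $v'_k$, whence $\gamma = q^{-k}\gamma'$ with the same $k$.

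For the \emph{if} direction, assume $\mu^l=(\mu')^l$ and $\lambda=p^k\lambda'$, $\gamma=q^{-k}\gamma'$ for some fixed $0\le k\le l-1$. I would write down an explicit candidate isomorphism $\Phi\colon\mathcal{V}_1(\mu,\lambda,\gamma)\to\mathcal{V}_1(\mu',\lambda',\gamma')$ by sending the basis vector $v_j$ to a scalar multiple of $v'_{j+k}$ (indices mod $l$), with the scalar chosen to reconcile the factors $\mu$ versus $\mu'$ appearing in the $x$- and $y$-actions. Concretely, set $\Phi(v_j) = c_j\, v'_{(j+k)\bmod l}$ where the $c_j$ are determined recursively by the requirement that $\Phi$ commute with the action of $x$ (this forces $c_{j+1}/c_j = \mu/\mu'$ for the generic step and fixes the wrap-around consistency using $\mu^l=(\mu')^l$), after normalizing $c_0=1$. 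Then I would verify that this $\Phi$ also intertwines the actions of $z$ and $y$: for $z$ this is immediate from $p^j\lambda = p^{j+k}\lambda'$; for $y$ one checks that the coefficient $q^{-j}(pq\gamma - (pq)^j\lambda)/(pq-1)$ on the source matches $q^{-(j+k)}(pq\gamma' - (pq)^{j+k}\lambda')/(pq-1)$ on the target after accounting for the $c_j$'s — and this is exactly where the relations $\gamma = q^{-k}\gamma'$ and $\lambda = p^k\lambda'$ get used, since they give $pq\gamma = q^{-k}(pq\gamma')\cdot q^{k}\cdot q^{-k}$-type rescalings that make the two expressions proportional by the right power of $q$. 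Because $\Phi$ is bijective on basis vectors and $\mathcal{V}_1(\mu,\lambda,\gamma)$ is simple (Theorem~\ref{t1}), it is an isomorphism of modules.

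The main obstacle I anticipate is purely bookkeeping rather than conceptual: getting the wrap-around scalar in the definition of $\Phi$ exactly right so that the $x$- and $y$-relations close up consistently around the cycle of length $l$, which is precisely the point where the hypothesis $\mu^l=(\mu')^l$ (as opposed to $\mu=\mu'$) is essential. A clean way to organize this is to first treat the special case $k=0$ (where $\Phi$ is ``diagonal'' and only rescales within each $v_j\mapsto v'_j$), and then compose with the ``shift'' isomorphism $\mathcal{V}_1(\mu',\lambda',\gamma')\cong\mathcal{V}_1(\mu',p^{-k}\lambda',q^k\gamma')$ induced by $v'_j\mapsto v'_{(j+k)\bmod l}$ (which one checks directly is a module map, again using $l=\lcmu(m,n)$ only implicitly through the cyclic structure); chaining these two gives the general case and isolates the delicate computation into the single, cleanly stated shift map. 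The distinctness of the joint $(z,\theta)$-eigenvalues — the one genuinely structural input — is the same $\lcmu$ argument already used in Theorem~\ref{t1}, so no new idea is needed there.
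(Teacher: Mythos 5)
Your proposal is correct and follows essentially the same route as the paper: the forward direction pins down the image of $v_0$ as a scalar multiple of a single basis vector via the pairwise-distinct joint $(z,\theta)$-eigenvalues (the same $\lcmu(m,n)$ argument as in Theorem \ref{t1}, with $x^l$ central giving $\mu^l=(\mu')^l$), and the converse is exactly the paper's shift-and-rescale map $v_i\mapsto(\mu^{-1}\mu')^i v_{k\oplus i}$. One small slip worth fixing: $x$-equivariance forces $c_{j+1}/c_j=\mu'/\mu$, not $\mu/\mu'$, and with that ratio the wrap-around consistency is precisely $\mu^l=(\mu')^l$, as you anticipated.
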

\begin{proof}
Let $\phi:\mathcal{V}_1(\mu,\lambda,\gamma)\rightarrow \mathcal{V}_1(\mu',\lambda',\gamma')$ be a module isomorphism. As $v_i=\mu^{-i}v_0x^i$ holds in $\mathcal{V}_1(\mu,\lambda,\gamma)$, therefore $\phi$ can be uniquely determined by $\phi(v_0)$. Suppose that 
\[\phi(v_0)=\sum\limits_{i=0}^{l-1}\xi_iv_i\] for at least one of $\xi_i\in \mathbb{k}^*$. If there are two nonzero coefficients, say $\xi_k$ and $\xi_s$, then the equalities 
\[\phi(v_0z)=\phi(v_0)z\ \ \text{and}\ \ \phi(v_0\theta)=\phi(v_0)\theta\] imply that 
\[\lambda=\lambda'p^k=\lambda'p^s\ \ \text{and}\ \ \gamma=\gamma'q^{-k}=\gamma'q^{-s}.\] This implies $(k-s)$ is a common divisor of $\ord(p)$ and $\ord(q)$, which is a contradiction. Therefore $\phi(v_0)=\xi v_k$ for some $\xi\in \mathbb{k}^*$ and for some $k$ with $0\leq k\leq l-1$. With this form of $\phi$, we can easily obtain the required relations.
\par Conversely, assume the relations between $(\mu,\lambda,\gamma)$ and $(\mu',\lambda',\gamma')$. Then define a $\mathbb{k}$-linear map $\psi:\mathcal{V}_1(\mu,\lambda,\gamma)\rightarrow\mathcal{V}_1(\mu',\lambda',\gamma')$ by $\psi(v_i)=(\mu^{-1}\mu')^iv_{k\oplus i}$, where $\oplus$ is the addition modulo $l$. It is easy to verify that $\psi$ is a module isomorphism.
\end{proof}
Next, we see when two distinct simple modules of type $\mathcal{V}_2(\mu,\lambda)$ are isomorphic. 
\begin{prop}\label{2iso}
  Let $(\mu,\lambda)$ and $(\mu',\lambda')$ belong to $(\mathbb{k}^*)^2$. Then $\mathcal{V}_2(\mu,\lambda)\cong \mathcal{V}_2(\mu',\lambda')$ if and only if $\mu^l=(\mu')^l$ and $\lambda=\lambda'$.
\end{prop}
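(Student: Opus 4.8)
The plan is to mimic the proof of Proposition \ref{1iso}, exploiting the fact that in $\mathcal{V}_2(\mu,\lambda)$ the element $y$ acts invertibly, so the whole module is cyclically generated by $v_0$ via the action of $y$, namely $v_i=\mu^{-i}v_0y^i$. Hence any module isomorphism $\phi:\mathcal{V}_2(\mu,\lambda)\to\mathcal{V}_2(\mu',\lambda')$ is completely determined by the single vector $\phi(v_0)$.

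First I would write $\phi(v_0)=\sum_{i=0}^{l-1}\xi_iv_i$ and use the relations $\phi(v_0z)=\phi(v_0)z$ together with $\phi(v_0\theta)=\phi(v_0)\theta$. Recalling that in $\mathcal{V}_2$ the actions are $v_iz=p^{-i}\lambda v_i$ and $v_i\theta=\lambda q^i v_i$, comparing coefficients forces that whenever $\xi_k,\xi_s$ are both nonzero we get $p^{-k}\lambda'=p^{-s}\lambda'$ and $\lambda' q^k=\lambda' q^s$, so $(k-s)$ is a common multiple of $m$ and $n$, forcing $k\equiv s\pmod l$; thus $\phi(v_0)=\xi v_k$ for a single index $k$ and some $\xi\in\mathbb{k}^*$. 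Next I would pin down $k$: applying $\phi$ to $v_0x=0$ (which holds in $\mathcal{V}_2(\mu,\lambda)$) gives $\xi v_kx=0$, and since in $\mathcal{V}_2(\mu',\lambda')$ we have $v_kx=\mu'^{-1}\lambda'[k]_{p,q}v_{k-1}$ for $1\le k\le l-1$ and $v_0x=0$, and $[k]_{p,q}\neq 0$ for $1\le k\le l-1$ (as $\ord(pq)=\lcmu(m,n)/\gcd$ considerations, or directly since $[k]_{p,q}=0$ only when both orders divide $k$) — wait, more carefully, $[k]_{p,q}=0$ forces $k$ to be a multiple of $\ord(pq)$, which may be smaller than $l$; however the vanishing of $v_kx$ would then make $v_{k-1}\notin\phi(\mathcal{V}_2)$ impossible since $\phi$ is onto, so in fact we must have $k=0$. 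Then $\phi(v_0)=\xi v_0$; comparing the $z$-action immediately gives $\lambda=\lambda'$, and tracking the $y$-action through $\phi(v_{l-1}y)=\phi(v_{l-1})y$ (using $v_{l-1}y=\mu v_0$ on the left and $v_{l-1}y=\mu' v_0$ on the right, after computing $\phi(v_i)=\xi(\mu^{-1})^i\mu'^i v_i$) yields $\mu^l=\mu'^l$.

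For the converse, assuming $\mu^l=\mu'^l$ and $\lambda=\lambda'$, I would define $\psi:\mathcal{V}_2(\mu,\lambda)\to\mathcal{V}_2(\mu',\lambda')$ by $\psi(v_i)=(\mu^{-1}\mu')^i v_i$ for $0\le i\le l-1$, and verify on generators that $\psi$ intertwines the actions of $z$, $x$, $y$; the only place where $\mu^l=\mu'^l$ is needed is in checking $\psi(v_{l-1}y)=\psi(v_{l-1})y$, where the left side is $(\mu^{-1}\mu')^{l-1}\mu v_0$ and the right side is $(\mu^{-1}\mu')^{l-1}\mu' v_0 = (\mu^{-1}\mu')^{l}\mu v_0$, and these agree precisely because $\mu^l=\mu'^l$. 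The actions of $z$ and $x$ are checked coefficient-by-coefficient and go through without any constraint beyond $\lambda=\lambda'$.

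The main obstacle I anticipate is the step forcing $k=0$: one must be careful that the argument "$v_kx\ne 0$ for $1\le k\le l-1$" is not literally true, since $[k]_{p,q}$ can vanish for $k$ a positive multiple of $\ord(pq)$ that is less than $l$. The correct argument is instead surjectivity of $\phi$: since $\phi(v_0)=\xi v_k$, the cyclic submodule it generates under $y$ is spanned by $v_k,v_{k+1},\dots$; this exhausts $\mathcal{V}_2(\mu',\lambda')$ only if the $y$-action starting at $v_k$ reaches all basis vectors, which — because $y$ acts invertibly and cyclically permutes $\{v_0,\dots,v_{l-1}\}$ up to scalars — it always does, for any $k$. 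So in fact $k$ need not be $0$ a priori from surjectivity alone; rather, one pins down the relations $\lambda=\lambda'$ and $\mu^l=\mu'^l$ directly from $\phi(v_0)=\xi v_k$ by the same bookkeeping as in Proposition \ref{1iso}, and the absence of a third parameter $\gamma$ (whose transformation law $\gamma=q^{-k}\gamma'$ in Proposition \ref{1iso} genuinely detected $k$) means the index $k$ is now invisible: different values of $k$ give isomorphic modules, and the clean statement is simply $\mu^l=\mu'^l$ and $\lambda=\lambda'$. I would make sure the write-up reflects this by noting that $v_kz=p^{-k}\lambda'v_k=\lambda v_k$ forces $p^{-k}\lambda'=\lambda$, but combined with $v_k\theta=\lambda'q^kv_k=\gamma$-analog $=\lambda v_k$ wait — here both $z$ and $\theta$ act on $v_0$ by $\lambda$ and on the target's $v_k$ by $p^{-k}\lambda'$ and $q^k\lambda'$ respectively, so we get $p^{-k}\lambda'=\lambda=q^k\lambda'$, hence $p^{-k}=q^k$, i.e. $(pq)^k=1$, so $k$ is a multiple of $\ord(pq)$; this does constrain $k$ but still allows several values, all giving $\lambda=p^{-k}\lambda'$. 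To get the cleanest iff I would then observe that replacing the basis $\{v_i\}$ of $\mathcal{V}_2(\mu',\lambda')$ by $\{v_{k\oplus i}\}$ shows $\mathcal{V}_2(\mu',\lambda')\cong\mathcal{V}_2(\mu',p^{-k}\lambda')$ whenever $(pq)^k=1$, reconciling the apparent freedom in $k$ with the stated condition $\lambda=\lambda'$ — so the honest statement requires checking that $(pq)^k=1$ together with $p^{-k}\lambda'=\lambda$ actually forces, via the $x$-action compatibility, that one may take $k=0$; this reconciliation is the subtle point and is exactly where I would spend the most care.
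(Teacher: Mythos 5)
Your proposal does not actually prove the stated equivalence: after correctly extracting from the $z$- and $\theta$-actions that $\phi(v_0)=\xi v_k$ with $(pq)^k=1$ and $\lambda=p^{-k}\lambda'$, you stop at the point of ``reconciling'' this with $\lambda=\lambda'$ and explicitly leave that step open. That is the decisive gap, and it cannot be closed in the direction you (and the paper) want. The paper's own proof eliminates $k$ by observing $\xi v_kx=0$ and concluding $k=0$; but in $\mathcal{V}_2(\mu',\lambda')$ one has $v_kx=(\mu')^{-1}\lambda'[k]_{p,q}v_{k-1}$, which vanishes whenever $[k]_{p,q}=0$, i.e.\ whenever $\ord(pq)\divides k$, and $\ord(pq)$ can be a proper divisor of $l$ (this is the whole point of Section \ref{maximum}). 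So ``$v_kx=0$ implies $k=0$'' is only valid when $\ord(pq)=l$, and your instinct that this is the weak point is exactly right.

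In fact the relabeling you sketch at the end is a genuine isomorphism, which shows the ``only if'' half of the statement fails in general: for $\ord(pq)\divides k$ and $\mu^l=(\mu')^l$, the map $v_i\mapsto(\mu^{-1}\mu')^{i}v_{k\oplus i}$ intertwines all three actions, because $q^k=p^{-k}$ gives $[k+i]_{p,q}=p^{-k}[i]_{p,q}$, while $[k]_{p,q}=[l-k]_{p,q}=0$ takes care of the endpoints and $\mu^l=(\mu')^l$ handles the $y$-wraparound; hence $\mathcal{V}_2(\mu,p^{-k}\lambda')\cong\mathcal{V}_2(\mu',\lambda')$. Taking $m=n=4$ with $p=q$ a primitive fourth root of unity and $k=2$ (so $\ord(pq)=2<l=4$ and $p^{-2}=-1$) yields $\mathcal{V}_2(\mu,-\lambda')\cong\mathcal{V}_2(\mu,\lambda')$ with $-\lambda'\neq\lambda'$. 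So no argument can establish the proposition as stated; the correct condition is $\mu^l=(\mu')^l$ together with $\lambda=p^{-k}\lambda'$ for some $k$ with $(pq)^k=1$, which collapses to $\lambda=\lambda'$ precisely when $m\divides\ord(pq)$ (in particular when $\ord(pq)=l$, e.g.\ when $\gcdi(m,n)=1$). Your converse direction (the $k=0$ relabeling, as in the converse of Proposition \ref{1iso}) is correct and is the same as the paper's; the forward direction should be rewritten along the lines above rather than forced to $k=0$.
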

\begin{proof}
    Let $\phi:\mathcal{V}_1(\mu,\lambda)\rightarrow \mathcal{V}_1(\mu',\lambda')$ be a module isomorphism. Using a similar argument as in Proposition \ref{1iso}, we have $\phi(v_0)=\xi v_k$ for some $\xi\in \mathbb{k}^*$ and for some $k$ with $0\leq k\leq l-1$. Simplify the equality $\phi(v_0x)=\phi(v_0)x$ to obtain $0=\xi v_kx$ in $\mathcal{V}_1(\mu',\lambda')$. Therefore it follows that $k=0$. Thus $\phi(v_0)=\xi v_0$ for some $\xi\in \mathbb{K}^*$ and the required relations can be obtained from this. The converse part follows from the converse part of Proposition \ref{1iso} with $k=0$.
\end{proof}
Finally, we see that two distinct simple modules of type $\mathcal{V}_3(\lambda)$ are not isomorphic. 
\begin{prop}
  Let $\lambda,\lambda'\in \mathbb{k}^*$. Then $\mathcal{V}_3(\lambda)\cong \mathcal{V}_3(\lambda')$ if and only if $\lambda=\lambda'$.
\end{prop}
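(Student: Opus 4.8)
The plan is to adapt the argument used in Propositions \ref{1iso} and \ref{2iso}, exploiting the fact that the operator $z$ acts diagonalizably on $\mathcal{V}_3(\lambda)$ with a distinguished eigenvalue, namely $\lambda$, that can be intrinsically recovered from the module structure. First I would note that the converse is immediate: if $\lambda=\lambda'$ then the identity map on the common basis $\{v_k\}$ is manifestly an $\mathcal{H}_{p,q}$-module isomorphism, since the action formulas \eqref{5} and \eqref{6} depend only on $\lambda$. So the content is in the forward direction.

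For the forward direction, suppose $\phi:\mathcal{V}_3(\lambda)\to\mathcal{V}_3(\lambda')$ is a module isomorphism. The key structural observation is that in $\mathcal{V}_3(\lambda)$ the subspace annihilated by $x$ is spanned by $v_0$ (from \eqref{6}, $v_kx=0$ forces $k=0$ since $[k]_{p,q}\neq 0$ for $1\le k\le\ord(pq)-1$ by the minimality established in Case III of Section \ref{sec6}), and likewise $\ker(x)$ in $\mathcal{V}_3(\lambda')$ is spanned by $v_0'$. Since $\phi$ is a module map it carries $\ker(x)$ onto $\ker(x)$, whence $\phi(v_0)=\xi v_0'$ for some $\xi\in\mathbb{k}^*$. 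Applying $\phi$ to the relation $v_0z=\lambda v_0$ and using that $\phi$ is $\mathcal{H}_{p,q}$-linear gives $\lambda\xi v_0'=\phi(v_0)z=\xi v_0' z=\xi\lambda' v_0'$, so $\lambda=\lambda'$. Alternatively, and perhaps more cleanly, one can simply observe that $z$ acts semisimply on $\mathcal{V}_3(\lambda)$ with eigenvalues $\{p^{-k}\lambda : 0\le k\le\ord(pq)-1\}$, and that $\lambda$ itself is the unique eigenvalue whose eigenvector lies in $\ker(x)$; an isomorphism must preserve this data.

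Since $\dime_{\mathbb{k}}\mathcal{V}_3(\lambda)=\ord(pq)$ is the same for both modules, no dimension obstruction arises, and once $\phi(v_0)=\xi v_0'$ is known the remaining basis vectors are determined by $v_k=v_0y^k$, forcing $\phi(v_k)=\xi v_k'$; this is automatically consistent with \eqref{5} and \eqref{6} precisely when $\lambda=\lambda'$. I do not anticipate a genuine obstacle here: the only point requiring a little care is justifying that $v_0$ spans $\ker(x)$, which rests on the non-vanishing of $[k]_{p,q}$ for $1\le k\le\ord(pq)-1$ — and this is exactly the minimality property of $\ord(pq)$ recorded in the construction of $\mathcal{V}_3(\lambda)$ and reused in Case III of the classification. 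The proof is therefore essentially a streamlined version of the uniqueness half of Proposition \ref{2iso}, with the role of the highest-weight-type vector played by the generator of $\ker(x)$.
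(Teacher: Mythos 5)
Your proof is correct and is essentially the paper's argument, which simply defers to Proposition \ref{2iso}: pin down $\phi(v_0)$ (you do this directly via the one-dimensionality of $\ker(x)$, using that $[k]_{p,q}\neq 0$ for $1\leq k\leq \ord(pq)-1$, while the paper's cited argument reaches $\phi(v_0)=\xi v_0$ via the $z,\theta$-eigenvalues plus the $x$-action) and then compare $z$-eigenvalues to get $\lambda=\lambda'$. No gap; the converse via the identity map is likewise immediate.
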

\begin{proof}
    It follows from Proposition \ref{2iso}, given the module structure on $\mathcal{V}_3(\lambda)$.
\end{proof}

\section{When \texorpdfstring{$\ord(pq)$}{TEXT} is maximum?}\label{maximum}
Let $p,q\in\mathbb{k}^*$ with $pq\neq 1$ be such that $\ord(p)=m$ and $\ord(q)=n$. In this section, we will derive an expression for $\ord(pq)$ in the cyclic group $\Gamma$ generated by $p$ and $q$. Additionally, we will classify the pairs $(m, n)$ that result in either the maximum or non-maximum values of $\ord(pq)$ in $\Gamma$. Let $l$ denote the least common multiple of $m$ and $n$. Clearly, the order of $\Gamma$ is $l$. From Step 2 of Section \ref{pisection}, we recall the expression of $p$ and $q$ in terms of the generator $g$ of $\Gamma$, i.e., $\langle p\rangle=\langle g^{s_1}\rangle$ and $\langle q\rangle=\langle g^{s_2}\rangle$ where $s_1=\frac{n}{\gcd(m,n)}$ and $s_2=\frac{m}{\gcdi(m,n)}$. Then $l=s_1m=s_2n$. Therefore we can write 
\begin{equation}\label{order}
p=g^{s_1k_1}\ \ \text{and}\ \ q=g^{s_2k_2}    
\end{equation}
such that $\gcdi(k_1,m)=\gcdi(k_2,n)=1$. Now the element $pq$ in the cyclic group $\Gamma$ is of the form $pq=g^{s_1k_1+s_2k_2}$ with $s_1k_1+s_2k_2=\frac{nk_1+mk_2}{\gcdi(m,n)}$.
Then we can compute 
\begin{equation}\label{ordpq}
    \ord(pq)=\displaystyle\frac{l}{\gcdi\left({s_1k_1+s_2k_2},l\right)}=\displaystyle\frac{mn}{\gcdi(mk_2+nk_1, mn)}.
\end{equation} 
Thus we have now established that 
\begin{prop}
The order of the element $pq$ in $\Gamma$ is equal to the order of $mk_2+nk_1$ in the group $\mathbb{Z}_{mn}$, where $k_1$ and $k_2$ are two parameters as in (\ref{order}) that parameterize the elements $p$ and $q$, respectively, in $\Gamma$.
\end{prop}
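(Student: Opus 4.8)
The statement is essentially a repackaging of the computation already carried out in~(\ref{ordpq}), and the plan is to make the two underlying group-theoretic facts explicit. The first fact is the standard order formula in a cyclic group: if $G=\langle g\rangle$ has order $N$, then $\ord(g^j)=N/\gcdi(j,N)$, and equivalently the order of a residue class $j$ in $\mathbb{Z}_N$ is $N/\gcdi(j,N)$. Applying this to $\Gamma=\langle g\rangle$, which has order $l=\lcmu(m,n)$, together with the expression $pq=g^{s_1k_1+s_2k_2}$ coming from~(\ref{order}), we immediately obtain $\ord(pq)=l/\gcdi(s_1k_1+s_2k_2,\,l)$.

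Next I would clear denominators in order to pass from $l$ to $mn$. Set $d=\gcdi(m,n)$, so that $l=mn/d$, $s_1=n/d$ and $s_2=m/d$, whence $s_1k_1+s_2k_2=(mk_2+nk_1)/d$. Using the homogeneity identity $\gcdi(da,db)=d\,\gcdi(a,b)$ (legitimate because $mk_2+nk_1$ and $mn$ are both divisible by $d$, and $s_1k_1+s_2k_2$ and $l$ are genuine non-negative integers by the set-up preceding~(\ref{order})), we get
\[
\gcdi\!\left(s_1k_1+s_2k_2,\ l\right)=\gcdi\!\left(\tfrac{mk_2+nk_1}{d},\ \tfrac{mn}{d}\right)=\tfrac{1}{d}\,\gcdi(mk_2+nk_1,\ mn),
\]
and dividing $l=mn/d$ by this expression yields $\ord(pq)=mn/\gcdi(mk_2+nk_1,\,mn)$, which is precisely~(\ref{ordpq}).

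Finally, I would invoke the order formula a second time, now inside the cyclic group $\mathbb{Z}_{mn}$ of order $mn$: the order of the residue $mk_2+nk_1$ in $\mathbb{Z}_{mn}$ equals $mn/\gcdi(mk_2+nk_1,\,mn)$, which coincides with the expression just derived for $\ord(pq)$, and this completes the proof. There is no serious obstacle here, since the substance is already contained in~(\ref{ordpq}); the only step warranting a moment's care is the denominator-clearing identity $\gcdi(da,db)=d\,\gcdi(a,b)$ together with the observation that $N/\gcdi(j,N)$ is the order of $j$ in \emph{every} cyclic group of order $N$, which is exactly what licenses the translation between ``order in $\Gamma$'' and ``order in $\mathbb{Z}_{mn}$''.
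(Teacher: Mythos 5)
Your proposal is correct and follows essentially the same route as the paper: the paper's argument is precisely the computation preceding the proposition, namely writing $pq=g^{s_1k_1+s_2k_2}$ with $s_1k_1+s_2k_2=(mk_2+nk_1)/\gcdi(m,n)$ and applying the cyclic-group order formula to obtain $\ord(pq)=mn/\gcdi(mk_2+nk_1,mn)$, which is the order of $mk_2+nk_1$ in $\mathbb{Z}_{mn}$. You merely make the denominator-clearing step $\gcdi(da,db)=d\,\gcdi(a,b)$ explicit, which the paper leaves implicit.
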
 
\begin{rema}\label{eqdiv}
It is important to note that the $\ord(pq)$ is a divisor of $l=\lcmu(m,n)$. Moreover by (\ref{ordpq}), the $\ord(pq)=l$ if and only if $\gcd(mk_2+nk_1,mn)=\gcd(m,n)$. Clearly, the $\ord(pq)=l$ if $\gcdi(m,n)=1$ or $m=n$ is an odd prime. Now we can observe the following:
   \begin{itemize}
          \item[(1)] The $\gcdi(m,n)$ is a divisor of $\gcdi(mk_2+nk_1,mn)$.
          \item [(2)] Every prime divisor of $\gcdi(mk_2+nk_1,mn)$ is also a prime divisor of $\gcdi(m,n)$.
      \end{itemize} 
Thus the set of prime divisors for both $\gcdi(mk_2+nk_1, mn)$ and $\gcdi(m,n)$ are identical.
\end{rema}
For any positive integer $k$, let $e_{k}(p_i)$ denote the exponent of the prime $p_i$ in the prime factorization of $k$. Clearly $e_k(p_i)=0$ if $p_i$ is not a prime factor of $k$. In the following, we will establish the necessary and sufficient conditions for $\ord(pq)$ to always be maximum in $\Gamma$.
\begin{prop}\label{suff}
    The $\ord(pq)=l$ for all $p,q\in \mathbb{k}^*$ with $\ord(p)=m$ and $\ord(q)=n$ if and only if either $m=n$ is an odd prime or $|e_m(p_i)-e_n(p_i)|\geq 1$ for all primes $p_i$ in the prime factorization of $m$ or $n$. 
    \end{prop}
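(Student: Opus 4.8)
The plan is to reduce everything to the formula for $\ord(pq)$ recorded in (\ref{ordpq}) and then run a finite case analysis on the prime factorizations of $m$ and $n$. Write $d=\gcdi(m,n)$, $a=m/d$, $b=n/d$, so that $\gcdi(a,b)=1$, $l=dab$, and $a=s_{2}$, $b=s_{1}$; then (\ref{ordpq}) becomes $\ord(pq)=l/\gcdi(ak_{2}+bk_{1},l)$ as $(k_{1},k_{2})$ ranges over integers with $\gcdi(k_{1},m)=\gcdi(k_{2},n)=1$ and $l\nmid ak_{2}+bk_{1}$, the last condition being exactly the hypothesis $pq\neq 1$; call such a pair \emph{admissible}. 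Thus ``$\ord(pq)=l$ for all admissible $p,q$'' is equivalent to ``$\gcdi(ak_{2}+bk_{1},l)=1$ for every admissible $(k_{1},k_{2})$''. Call a prime $\ell$ \emph{critical} if $e_{m}(\ell)=e_{n}(\ell)\geq 1$; since $e_{ab}(\ell)=|e_{m}(\ell)-e_{n}(\ell)|$, being critical is the same as $\ell\mid d$ together with $\ell\nmid a$ and $\ell\nmid b$, and the negation of the stated condition ``$|e_{m}(p_{i})-e_{n}(p_{i})|\geq 1$ for all primes $p_{i}\mid mn$'' is precisely ``a critical prime exists''.

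For the ``if'' direction, suppose first that no critical prime exists. Then every prime $\ell\mid l=dab$ divides $a$ or $b$; if $\ell\mid a$ then $\ell\mid m$ (so $\ell\nmid k_{1}$) and $\ell\nmid b$, whence $ak_{2}+bk_{1}\equiv bk_{1}\not\equiv 0\pmod{\ell}$, and symmetrically if $\ell\mid b$; hence $\gcdi(ak_{2}+bk_{1},l)=1$ for every admissible pair and $\ord(pq)=l$ always. Suppose instead $m=n$ is an odd prime $\ell$, so $d=\ell$, $a=b=1$, $l=\ell$; then $\gcdi(k_{1}+k_{2},\ell)>1$ would force $\ell\mid k_{1}+k_{2}$, i.e. $l\mid ak_{2}+bk_{1}$, i.e. $pq=1$, contradicting admissibility, so again $\ord(pq)=l$ for every admissible pair. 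Note that here a critical prime does exist, and it is precisely the constraint $pq\neq 1$ that forces the conclusion.

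For the ``only if'' direction I argue contrapositively: assuming a critical prime exists and $m=n$ is not an odd prime, I produce an admissible pair with $\gcdi(ak_{2}+bk_{1},l)>1$. If $2$ is critical, then $2\mid m$ and $2\mid n$ force $k_{1},k_{2}$ odd while $a,b$ are odd, so $ak_{2}+bk_{1}$ is even for \emph{every} admissible pair; since admissible pairs exist and $2\mid l$, each already has $\gcdi(ak_{2}+bk_{1},l)\geq 2$, and we are done. So assume every critical prime is odd, fix one, say $\ell$, and set $c=e_{m}(\ell)=e_{n}(\ell)$, so $\ell^{c}$ is the exact power of $\ell$ in $l$. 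If $l=\ell^{c}$ then $m=n=\ell^{c}$, and since this is not an odd prime, $c\geq 2$; then $(k_{1},k_{2})=(1,\ell-1)$ is admissible (as $\ell^{c}\nmid\ell$) with $\gcdi(\ell,l)=\ell>1$. If $l\neq\ell^{c}$, pick a prime $\ell_{1}\mid l$ with $\ell_{1}\neq\ell$; since $\ell\nmid a$, choose by the Chinese Remainder Theorem $k_{1}=1$ and $k_{2}$ coprime to $n$ with $k_{2}\equiv -a^{-1}b\pmod{\ell}$, so $ak_{2}+bk_{1}\equiv 0\pmod{\ell}$, and in addition arrange $ak_{2}+bk_{1}\not\equiv 0\pmod{\ell_{1}}$ — automatic if $\ell_{1}\mid ab$, and if $\ell_{1}$ is critical (hence odd, so $\ell_{1}\geq 3$) achievable by steering the residue of $k_{2}$ modulo $\ell_{1}$ away from the single bad value. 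Then $\ell_{1}\mid l$ but $\ell_{1}\nmid ak_{2}+bk_{1}$, so $l\nmid ak_{2}+bk_{1}$ (the pair is admissible) while $\ell\mid\gcdi(ak_{2}+bk_{1},l)$, giving $\ord(pq)<l$.

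The main obstacle is exactly the bookkeeping in this last construction: one must force a common prime factor of $ak_{2}+bk_{1}$ with $l$ while certifying that $l$ itself does not divide $ak_{2}+bk_{1}$, i.e. that $pq\neq 1$. The two simplifying observations are that a critical prime $2$ disposes of everything at once, and that the prime-power case $l=\ell^{c}$ pins down $m=n=\ell^{c}$ and thereby isolates exactly the exceptional configuration $m=n$ prime, in which admissibility and divisibility of $ak_{2}+bk_{1}$ by $\ell$ cannot coexist; in all remaining cases a routine Chinese-remainder choice of $k_{2}$ finishes. (The degenerate pairs $(m,n)$ admitting no admissible $p,q$ at all, namely $m=n\in\{1,2\}$, are excluded by the standing hypotheses.)
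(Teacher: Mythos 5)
Your argument is correct, and it reaches the conclusion by a genuinely different route than the paper. For the ``if'' direction you show directly that, in the absence of a prime with equal positive exponent in $m$ and $n$, every prime divisor of $l$ divides $a=m/\gcdi(m,n)$ or $b=n/\gcdi(m,n)$ and hence cannot divide $ak_2+bk_1$, so $\gcdi(ak_2+bk_1,l)=1$ outright; the paper instead compares $p_i$-adic exponents of $\gcdi(mk_2+nk_1,mn)$ and $\gcdi(m,n)$ through Remark \ref{eqdiv} --- the same valuation idea, but your organization is shorter and avoids the case juggling with $a,b,c$. For the ``only if'' direction the paper runs one construction (B\'ezout plus an appeal to Dirichlet's theorem to pick $k_2$) to force $m=n$, followed by a second construction $k_2=p_j-1$ to force $m=n$ to be an odd prime; you argue contrapositively with a cleaner case split: when $2$ is critical, a parity argument shows \emph{every} admissible pair already has $\ell=2$ dividing $ak_2+bk_1$ (this is in effect the same parity trick the paper uses later, in the converse direction of its last proposition of Section \ref{maximum}), while for an odd critical prime $\ell$ you separate the prime-power case $l=\ell^{c}$ (which pins down $m=n=\ell^{c}$ and, via the pair $(1,\ell-1)$, isolates exactly the exceptional configuration $m=n$ an odd prime) from the general case, where a plain Chinese Remainder choice of $k_2$ forces $\ell\mid ak_2+bk_1$ while a second prime $\ell_1\mid l$ certifies $l\nmid ak_2+bk_1$, i.e.\ $pq\neq 1$. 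Your route thus replaces Dirichlet's theorem by CRT and makes explicit where admissibility ($pq\neq1$) enters, which the paper leaves somewhat implicit in the phrase ``except for the possibility $m=n$.'' The one point you leave to a parenthetical --- the existence of at least one admissible pair in the $2$-critical case, equivalently the exclusion of $m=n\in\{1,2\}$ --- is exactly the point the paper also dispatches tersely (``by our hypothesis, $p_j$ must be odd''), so it is not a gap relative to the paper, though a sentence noting that for $m=n>2$ one can choose $q\neq p^{-1}$ would make it self-contained.
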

    \begin{proof}
    Suppose that the $\ord(pq)=l$ for all $p,q\in \mathbb{k}^*$ with $\ord(p)=m$ and $\ord(q)=n$. If $|e_m(p_i)-e_n(p_i)|\geq 1$ for all primes $p_i$ in the prime factorization of $m$ or $n$, then we are done.
    If not let $e_m(p_i)=e_n(p_i)$ for some prime divisor $p_i$ of $m$ and $n$. We first argue that $m=n$. Then we set \[s_1=\frac{n}{\gcdi(m,n)},\ k_1=1,\ s_2=\frac{m}{\gcdi(m,n)}.\] As $\gcdi(p_i,s_2)=1$, there exists $a,b\in \mathbb{Z}$ such that 
    \begin{equation}\label{prord}
    p_ia-s_2b=1    
    \end{equation}
    Note that $\gcdi(p_i,bs_1)=1$. Thus by Dirichlet prime number theorem, there exists a positive integer $t$ such that $bs_1+p_it$ is relatively prime to $n$. Set $ k_2 := bs_1+p_it~ ( \mo n )$, consequently $\gcdi(k_2,n)=1$. 
    Now multiplying (\ref{prord}) by $s_1$ and using the expression of $k_2$ we obtain $p_i$ divides $s_1k_1+s_2k_2$. For this choice, take $p=g^{s_1k_1}$ and $q=g^{s_2k_2}$ so that $\ord(p)=m$ and $\ord(q)=n$. Now we can observe that $\gcdi(s_1k_1,s_2k_2)=1$ and hence if $s_1k_1+s_2k_2\equiv 0 ~(\mo l)$, then $s_1=s_2=1$, i.e., $m=n$. Finally, we can easily verify from (\ref{ordpq}) that $1<\ord(pq)<l$ except for the possibility $m=n$. Thus we obtain $m=n$. 
    
    We now argue that $m=n$ is an odd prime. Let $p_j$ denote the least prime divisor of $m$. Then we set $s_1=k_1=s_2=1$ and $k_2=p_j-1$. For this choice, we can easily verify from (\ref{ordpq}) that $1<\ord(pq)<l$ except for the possibility $m=n=p_j$. Moreover, by our hypothesis, $p_j$ must be odd. This completes the first part proof.
    \par For the converse part, if $m=n$ is an odd prime then it is clear from Remark \ref{eqdiv}. Then we may assume that $|e_m(p_i)-e_n(p_i)|\geq 1$ for all primes $p_i$ in the prime factorization of $m$ or $n$. By Remark \ref{eqdiv}, 
    it is enough to show that $e_{\gcdi(mk_2+nk_1,mn)}(p_i)=e_{\gcdi(m,n)}(p_i)$ for every prime $ p_i$ divisor of $\gcdi(mk_2+nk_1,mn)$. If possible let $p_i$ be such a prime that  $e_{\gcdi(mk_2+nk_1,mn)}(p_i) > e_{\gcdi(m,n)}(p_i)$. For simplicity suppose $e_{\gcdi(mk_2+nk_1,mn)}(p_i)=c$ and $ e_{\gcdi(m,n)}(p_i)=b$, thus $ c> b$. Then $e_m(p_i)\geq b$ and $e_n(p_i)\geq b$. 
      Again by the hypothesis without loss of generality, we may assume that $e_m(p_i) > e_n(p_i)$. Thus $ e_n(p_i)= b$ as $ e_{\gcdi(m,n)}(p_i)=b$ and suppose $e_m(p_i)=a$ with $ a >b$.  If $c>a$, then by the definition of $c$, $p_i^a$ divides $\gcdi(mk_2+nk_1,mn)$. Also as $e_m(p_i)=a$ and $ \gcdi(k_1,m)=1$ it follows that  $e_{\gcdi(m,n)}(p_i)\geq a$, which is a contradiction. Thus $a\geq c$, then $e_m(p_i)\geq c$ and hence by the preceding argument $e_{n}(p_i)\geq c$. This implies $e_{\gcdi(m,n)}(p_i)\geq c$, a contradiction and hence $b=c$. 
      Thus we have completed the proof.
    \end{proof}
Now we will establish the necessary and sufficient conditions for $\ord(pq)$ to always be non-maximum in $\Gamma$.  
\begin{prop}
    The $\ord(pq)$ is less than $l$ for all $p,q\in \mathbb{k}^*$ with $\ord(p)=m$ and $\ord(q)=n$ if and only if $e_m(2)=e_n(2)\geq 1$. 
\end{prop}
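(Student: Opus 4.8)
The plan is to reduce the statement to an elementary assertion about residues via the order formula (\ref{ordpq}), and then to settle it prime by prime. Write $d:=\gcdi(m,n)$, so that $l=mn/d$, and recall from (\ref{order}) that $p=g^{s_1k_1}$ and $q=g^{s_2k_2}$ with $\gcdi(k_1,m)=\gcdi(k_2,n)=1$, every such integer pair $(k_1,k_2)$ arising this way. By (\ref{ordpq}), $\ord(pq)=l$ if and only if $\gcdi(mk_2+nk_1,mn)=d$. First I would clear the common factor $d$: writing $m=dm'$ and $n=dn'$ with $\gcdi(m',n')=1$, we get $mn=d^2m'n'$ and $mk_2+nk_1=d(m'k_2+n'k_1)$, so the condition becomes $\gcdi(m'k_2+n'k_1,\,dm'n')=1$. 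Since $\gcdi(m',n')=1$ and $k_1,k_2$ are coprime to $m',n'$ respectively, the integer $m'k_2+n'k_1$ is automatically coprime to $m'n'$; hence
\[
\ord(pq)=l \iff \gcdi(m'k_2+n'k_1,\,d)=1 .
\]
Therefore ``$\ord(pq)<l$ for all admissible $p,q$'' is equivalent to ``$\gcdi(m'k_2+n'k_1,d)>1$ for every $k_1$ coprime to $m$ and every $k_2$ coprime to $n$''.

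Next I would identify which residues matter. A routine argument (the Chinese Remainder Theorem, or the Dirichlet prime number theorem in the style of the proof of Proposition \ref{suff}) shows that the residue classes modulo $d$ of integers coprime to $m$ fill out exactly the unit group $(\mathbb{Z}/d\mathbb{Z})^{\times}$, and similarly with $n$. Since $\gcdi(m'k_2+n'k_1,d)$ depends only on $k_1$ and $k_2$ modulo $d$, the condition above becomes: there exist no units $u,v\in(\mathbb{Z}/d\mathbb{Z})^{\times}$ with $m'v+n'u$ again a unit of $\mathbb{Z}/d\mathbb{Z}$. By the Chinese Remainder Theorem this is in turn equivalent to the existence of a prime $p\divides d$ with $m'v+n'u\equiv 0~(\mo p)$ for \emph{all} $u,v\not\equiv 0~(\mo p)$. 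The slightly delicate point is the interchange of quantifiers (``for all $u,v$ there is a bad prime'' versus ``there is a prime bad for all $u,v$''), which is exactly what CRT permits, by gluing residues good at each individual prime into a single unit modulo $d$.

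Finally I would run the prime-by-prime analysis. Fix a prime $p\divides d$. Since $\gcdi(m',n')=1$, $p$ divides at most one of $m',n'$. If $p\divides m'$ then $m'v+n'u\equiv n'u~(\mo p)$, which is a unit for $u=1$ (as $p\notdivides n'$), and symmetrically if $p\divides n'$; so such a $p$ is never bad. If $p\notdivides m'n'$ and $p$ is odd, then $m'\cdot 1+n'\cdot 1$ and $m'\cdot 2+n'\cdot 1$ differ by $m'\not\equiv 0~(\mo p)$, so one of them is a unit; again $p$ is not bad. Hence the only possibility of a bad prime is $p=2$ together with $2\notdivides m'$ and $2\notdivides n'$, in which case $m'v+n'u\equiv 1+1\equiv 0~(\mo 2)$ is forced. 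Using $e_{m'}(2)=e_m(2)-\min\{e_m(2),e_n(2)\}$ and the analogous identity for $n'$, the three requirements $2\divides d$, $2\notdivides m'$, $2\notdivides n'$ hold simultaneously if and only if $e_m(2)=e_n(2)\geq 1$. Thus if $e_m(2)=e_n(2)\geq 1$, then $2\divides(m'k_2+n'k_1)$ for every admissible $k_1,k_2$, so $\gcdi(m'k_2+n'k_1,d)\geq 2$ and $\ord(pq)\leq l/2<l$ always; and if not, then for each $p\divides d$ we select a non-bad nonzero residue, glue these by CRT into units $u,v$ modulo $d$, lift to integers $k_1$ coprime to $m$ and $k_2$ coprime to $n$, and thereby produce a pair $p,q$ with $\ord(pq)=l$ (and $pq\neq 1$, since $l\geq 2$ as $m,n$ are not both $1$). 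I expect the two middle steps---the precise description of attainable residues and the quantifier interchange---to be the main obstacle; once these are settled, the prime analysis is short.
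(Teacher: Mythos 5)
Your proof is correct and follows essentially the same route as the paper: reduce via the order formula (\ref{ordpq}) to a gcd condition, observe that the only obstruction comes from the prime $2$ when it divides $\gcd(m,n)$ with equal exponents (odd plus odd is even), and use CRT to glue good residues into a witness pair $(k_1,k_2)$ in the remaining cases. Your presentation is somewhat cleaner — you divide out $d=\gcd(m,n)$ at the start and work directly with units of $\mathbb{Z}/d\mathbb{Z}$, and your argument is self-contained (you do not need to invoke Proposition \ref{suff} to know the ``bad'' prime set is nonempty, as the paper does) — but the underlying ideas, including the Dirichlet/CRT lifting of residues coprime to $d$ to integers coprime to $m$ or $n$, match the paper's.
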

\begin{proof}
Suppose that $\ord(pq)$ is less than $l$ for all $p,q\in \mathbb{k}^*$ with $\ord(p)=m$ and $\ord(q)=n$. Let us define the set 
\[S:=\{p_i~|~e_m(p_i)=e_n(p_i)\geq 1\}.\] 
Then by our assumption, it follows from Proposition \ref{suff} that the set $S$ is nonempty. We claim that $2\in S$. Assume (for contradiction) that 
$2\notin S$. We now find a pair $(p,q)$ with $\ord(p)=m$ and $\ord(q)=n$ such that $\ord(pq)=l$. We set $d=\gcdi(m,n)$ and $n=s_1d$ and $m=s_2d$. It is clear that $\gcdi(s_1,s_2)=1$ and $l=s_1s_2d$. Denote $h:=\prod\limits_{p_i\in S}p_i$. As $S$ is nonempty, the number $h$ exists (an odd number). It is easy to observe that $h$ divides $d$ and $\gcdi(h,s_1)=\gcdi(h,s_2)=1$. For each $p_i\in S$, consider the congruence relations:
\begin{itemize}
    \item [(1)] if $s_1\equiv s_2~(\mo p_i)$, then $x\equiv 1~(\mo p_i)$ and $y\equiv 1~(\mo p_i)$, else\\ if $s_1\not\equiv s_2~(\mo p_i)$, then $x\equiv 1~(\mo p_i)$ and $y\equiv -1~(\mo p_i)$;   
    \item [(2)] $x\equiv 1~(\mo s_1s_2)$ and $y\equiv 1~(\mo s_1s_2)$.
\end{itemize}
Since all $p_i\in S$ are relatively prime to each other and $s_1s_2$, the Chinese remainder theorem shows there are positive integers $x:=k_1$ and $y:=k_2$
which satisfy the congruence (1) and (2) modulo $hs_1s_2$. Since all the prime factors of $m$ and $n$ divide $hs_1s_2$, therefore from congruence (1) and (2) we obtain that $\gcdi(k_1,m)=\gcdi(k_2,n)=1$. This implies $\gcdi(s_1,s_2k_2)=\gcdi(s_2,s_1k_1)=1$. Therefore $\gcdi(s_1k_1+s_2k_2,l)=\gcdi(s_1k_1+s_2k_2,t)$ where $t=\prod\limits_{p_i\in S}p_i^{e_m(p_i)}$ is the largest divisor of $d$ with $\gcdi(s_1,t)=\gcdi(s_2,t)=1$. Then either case of congruence (1) implies that $s_1k_1+s_2k_2\not\equiv 0~(\mo p_i)$ for each $p_i\in S$. Thus it follows that $\gcdi(s_1k_1+s_2k_2,l)=1$ for some $k_1,k_2$ satisfying $\gcdi(k_1,m)=\gcdi(k_2,n)=1$. For this choice, take $p=g^{s_1k_1}$ and $q=g^{s_2k_2}$ so that $\ord(p)=m$ and $\ord(q)=n$ and $\ord(pq)=l$, a contradiction to the hypothesis.

\par Conversely suppose that $e_m(2)=e_n(2)\geq 1$. Then $s_1=\frac{n}{\gcdi(m,n)}$ and $s_2=\frac{m}{\gcdi(m,n)}$ are both odd numbers. Moreover the $k_1,k_2$ are odd numbers when $\gcdi(k_1,m)=1$ and $\gcdi(k_2,n)=1$. Thus the number $s_1k_1+s_2k_2$ is even for all such $k_1,k_2$. Hence by (\ref{ordpq}), the $\ord(pq)$ is less than $\lcmu(m,n)$ for all $p,q\in \mathbb{k}^*$ with $\ord(p)=m$ and $\ord(q)=n$.
\end{proof}
\section{The Inverse Parameter Case}\label{sec9}
Throughout this section, fix $p=q^{-1}\neq 1$ and set $\mathcal{H}'_q=\mathcal{H}_{q^{-1},q}$. Let us recall that $\mathcal{H}'_q$ is an associative $\mathbb{k}$-algebra generated by $x,y,z$ with the relations
\[zx=qxz,\ zy=q^{-1}yz,\ yx-qxy=z.\] 
As $p=q^{-1}$, we can utilize the second equality of (\ref{pqnum}) to obtain $[k]_{p,q}=kq^{k-1}$.
Suppose $q$ is a primitive $n$-th root of unity in $\mathbb{k}$. Similar to Subsection \ref{ztorsion}, every $z$-torsion simple modules over $\mathcal{H}'_q$ becomes a simple modules over the quantum plane $\mathcal{O}_{{q}}(\mathbb{k}^2)=\langle y,x~|~yx=qxy\rangle$.
Thus each $z$-torsion simple module $\mathcal{N}$ over $\mathcal{H}'_q$ is finite $\mathbb{k}$-dimensional with $\mathbb{k}$-dimension is either $1$ (if $\mathcal{N}$ is $x$-torsion or $y$-torsion) or $n$ (if $\mathcal{N}$ is $x,y$-torsionfree). It only remains to classify $z$-torsionfree simple modules. 
By our root of unity assumption, $\ch(\mathbb{k})\notdivides n$. Now we consider the following cases based on the $\ch(\mathbb{k})$.\\
\noindent\textsc{Case 1:} Suppose that $\ch(\mathbb{k})=\mathrm{p}>0$. Then we can easily verify from the defining relations and the identities in Lemma \ref{identity} that the elements $z^n,x^{n\mathrm{p}},y^{n\mathrm{p}}$ are in the center of $\mathcal{H}'_q$. Thus $\mathcal{H}'_q$ is a finitely generated module over a central subalgebra $\mathbb{k}[z^n,x^{n\mathrm{p}},y^{n\mathrm{p}}]$. Hence $\mathcal{H}'_q$ becomes a PI algebra by \cite[Corollary 13.1.13]{mcr}. Consequently, according to Proposition \ref{main}, it is quite clear that each simple $\mathcal{H}'_{q}$-module is finite-dimensional and can have dimension at most $\pideg(\mathcal{H}'_{q})$. Similar to Subsection \ref{torfree}, in this case, the $z$-torsionfree simple $\mathcal{H}'_q$-modules can be classified by taking a common eigenvector of the commuting operators $z,xy,x^{n\mathrm{p}}$ and $y^{n\mathrm{p}}$. Here the possible $\mathbb{k}$-dimensions of such a simple module $\mathcal{N}$ is either $n\mathrm{p}$ (if $\mathcal{N}$ is $x$-torsionfree or $y$-torsionfree) or $\mathrm{p}$ (if $\mathcal{N}$ is $x,y$-torsion). Thus in view of Proposition \ref{main}, the PI degree of $\mathcal{H}'_q$ is given by $\pideg(\mathcal{H}'_q)=n\mathrm{p}$. \\
\noindent\textsc{Case 2:} Suppose that $\ch(\mathbb{k})=0$. For $\lambda\in\mathbb{k}^*$, define the action of $\mathcal{H}'_{q}$ on the vector space $\mathbb{k}[t]$ by
\[t^kz=\lambda q^kt^k,\ t^ky=t^{k+1},\ t^kx=\begin{cases}
    kq^{k-1}\lambda t^{k-1},& k\geq 1\\
    0,& k=0.
\end{cases}\]
It is easy to verify that $\mathbb{k}[t]$ is a simple module over $\mathcal{H}'_{q}$. Moreover the annihilator of $\mathbb{k}[t]$ in $\mathcal{H}'_{q}$ is $(0)$, as every nonzero prime ideal contains $z$ (see \cite[Proposition 4.1]{jg}). Hence $\mathcal{H}'_{q}$ is a primitive algebra when $\ch(\mathbb{k})=0$. In this case, there is no finite-dimensional $z$-torsionfree simple $\mathcal{H}'_{q}$-module. It is important to note that Case 2 remains independent of the root of unity assumption. 
\par In particular combining both cases, we have established the following:
\begin{prop}
Suppose $q$ is a root of unity. Then $\mathcal{H}'_{q}$ is a PI algebra if and only if $\ch(\mathbb{k})>0$. 
\end{prop}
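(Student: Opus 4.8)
The plan is to combine the two cases that have just been analysed into a single biconditional. The statement to prove is: for $q$ a root of unity, $\mathcal{H}'_q$ is a PI algebra if and only if $\chr(\mathbb{k})>0$. Since the root-of-unity hypothesis on $q$ is fixed throughout the section, the whole argument reduces to assembling Case 1 and Case 2 above.

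First I would prove the ``if'' direction. Assume $\chr(\mathbb{k})=\mathrm{p}>0$. This is exactly Case 1: using the defining relations $zx=qxz$, $zy=q^{-1}yz$, $yx-qxy=z$ together with the identities of Lemma \ref{identity} (recalling that here $[k]_{p,q}=kq^{k-1}$, so that $[n\mathrm{p}]_{p,q}=0$ because $\mathrm{p}\divides n\mathrm{p}$ in characteristic $\mathrm{p}$ and $q^{n}=1$), one checks that $z^n$, $x^{n\mathrm{p}}$, $y^{n\mathrm{p}}$ are central. Hence $\mathcal{H}'_q$ is a finitely generated module over the central subalgebra $\mathbb{k}[z^n,x^{n\mathrm{p}},y^{n\mathrm{p}}]$ — finite generation follows from the PBW-type basis $z^ix^jy^k$ of Proposition \ref{basish} and the fact that these exponents may be reduced modulo $n$, $n\mathrm{p}$, $n\mathrm{p}$ respectively — and therefore $\mathcal{H}'_q$ is PI by \cite[Corollary 13.1.13]{mcr}.

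Next I would prove the ``only if'' direction by contraposition: if $\chr(\mathbb{k})=0$, then $\mathcal{H}'_q$ is not PI. This is Case 2: for any $\lambda\in\mathbb{k}^*$ the prescribed action makes $\mathbb{k}[t]$ a simple $\mathcal{H}'_q$-module (simplicity because $z$ acts semisimply with distinct eigenvalues $\lambda q^k$ on the spanning set, and $y$, $x$ move between these eigenspaces, so any nonzero submodule contains some $t^k$ and hence all of them), and its annihilator is $(0)$ because every nonzero prime ideal of $\mathcal{H}'_q$ contains $z$ by \cite[Proposition 4.1]{jg} while $z$ acts invertibly on $\mathbb{k}[t]$. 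Thus $\mathcal{H}'_q$ is a primitive algebra with an infinite-dimensional faithful simple module; were it PI, Kaplansky's theorem would force this simple module to be finite-dimensional over $\mathbb{k}$ (as in Proposition \ref{main}), a contradiction. Hence $\mathcal{H}'_q$ is not PI in characteristic zero, which is the required contrapositive.

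The only real point requiring care — the ``main obstacle'', though it is modest — is verifying the centrality claims in Case 1, i.e. confirming that the correct powers are $z^n$, $x^{n\mathrm{p}}$, $y^{n\mathrm{p}}$: one must track how the $(p,q)$-numbers $[k]_{p,q}=kq^{k-1}$ vanish, which needs \emph{both} $q^n=1$ and $\mathrm{p}\divides n\mathrm{p}$, so the extra factor of $\mathrm{p}$ in the exponents of $x$ and $y$ is essential and is precisely what fails in characteristic zero. Everything else is a direct citation of results already established: \cite[Corollary 13.1.13]{mcr} for the PI conclusion, Proposition \ref{main}/Kaplansky for the finite-dimensionality of simple modules of a prime affine PI algebra, and \cite[Proposition 4.1]{jg} for the structure of primes of $\mathcal{H}'_q$.
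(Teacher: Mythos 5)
Your proposal is correct and follows essentially the same route as the paper, whose proof of this proposition is precisely the assembly of Case 1 (centrality of $z^n,x^{n\mathrm{p}},y^{n\mathrm{p}}$, hence module-finite over a central subalgebra, hence PI) and Case 2 (the faithful infinite-dimensional simple module $\mathbb{k}[t]$ making $\mathcal{H}'_q$ primitive, hence not PI). Your only addition is to spell out the Kaplansky/Proposition~\ref{main} contradiction in characteristic zero, which the paper leaves implicit; this is fine.
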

\section*{Acknowledgments}
The second author would like to express his sincere gratitude to the National Board of Higher Mathematics, Department of Atomic Energy, Government of India for providing funding support for research work. 
\section*{Data Availability Statement}
Data sharing does not apply to this article as no datasets were generated or analyzed during the current study.

\end{document}